\newcounter{enu}
\newcounter{enubis}
\newtheorem{theorem}{Theorem}
\newtheorem{definition}[theorem]{Definition}
\newtheorem{lemma}[theorem]{Lemma}
\newtheorem{example}[theorem]{Example}
\newtheorem{problem}[theorem]{Problem}
\newtheorem{proposition}[theorem]{Proposition}
\newtheorem{corollary}[theorem]{Corollary}
\theoremstyle{remark}
\newtheorem{remark}[theorem]{Remark}
\newenvironment{mylist}%
{\begin{list}{P--\arabic{enu}}{\usecounter{enu}}\setcounter{enu}{\theenubis}}%
{\setcounter{enubis}{\theenu}\end{list}}
\newenvironment{barray}[1]{\left[\begin{array}{*{#1}{c}}}{\end{array}\right]}
\newcommand{\Q}{\mathord{\mathbb Q}}
\newcommand{\G}{\mathord{\mathbb G}}
\newcommand{\F}{\mathord{\mathbb F}}
\newcommand{\Z}{\mathord{\mathbb Z}}
\newcommand{\nm}{\mathord{\mathrm N}}
\date{\today}
\title{Continuants and some decompositions into squares}
\author{
Charles Delorme}
\email{\texttt{cd@lri.fr}}
\author{
Guillermo Pineda-Villavicencio}
\address{Centre for Informatics and Applied Optimisation, Federation University Australia}
\email{\texttt{work@guillermo.com.au}}
\keywords{Fermat's two-square theorem; four-square theorem; continuant; integer representations.}
\subjclass[2000]{Primary 11E25, Secondary 11D85, 11A05}
\begin{document}
\maketitle
\begin{abstract}
In 1855 H.~J.~S. Smith \cite{CELV99} proved Fermat's two-square theorem using the notion of
 palindromic continuants. In his paper, Smith constructed a proper representation of a prime number $p$ as a sum of two squares, given a solution of $z^2+1\equiv0\pmod{p}$,
and vice versa. In this paper, we extend the use of continuants to proper representations by sums of two squares in rings
of polynomials over fields of characteristic different from 2. New deterministic algorithms for finding the corresponding proper representations are presented.

Our approach will provide a new constructive proof of the four-square theorem and new proofs for other representations of integers by quaternary quadratic forms.
\end{abstract}



\section{Introduction}
 Fermat's two-square theorem has always captivated the mathematical community. Equally captivating are the known proofs of such a theorem; see, for instance,
\cite{Zag90,Her1848,Ser1848,CELV99,Bri72, Bar72}.
Among these proofs we were enchanted by Smith's elementary approach
\cite{CELV99}, which is well within the reach of undergraduates.
We remark Smith's proof is very similar to Hermite's \cite{Her1848},
Serret's \cite{Ser1848}, and Brillhart's \cite{Bri72}.

Two main ingredients of Smith's proof are the notion of continuant
(Definition~\ref{def:continuant} for arbitrary rings)
and the famous Euclidean algorithm.

Let us recall here, for convenience, a definition taken from \cite[pp.~148]{Jac85I}
\begin{definition}
\emph{Euclidean rings} are rings $R$ with no zero divisors which are endowed with a
Euclidean function $\nm$  from $R$ to  the nonnegative integers such that
for all $\tau_1, \tau_2\in R$ with $\tau_1\ne 0$, there exists $q, r \in R$
such that $\tau_2 = q\tau_1 + r$ and $\nm(r) < \nm(\tau_1)$.
\end{definition}
Among well-known examples, we are going to
use  the integers with $\nm(u)=|u|$, and polynomials over
a field  with $\nm(P)=2^{\mathrm{degree}(P)}$ and $\nm(0)=0$.

\begin{definition}[Continuants in arbitrary rings, { \cite[Sec.~6.7]{GKP94}}]\label{def:continuant}
Let $Q$ be  a sequence  of elements $(q_1,q_2,\ldots,q_n)$ of a ring $R$.
We associate with  $Q$ an element $[Q]$ of $R$ via the following recurrence formula
\begin{align*}
[\,]&=1, [q_1]=q_1, [q_1,q_2]=q_1q_2+1, \text{~and}\\
[q_1,q_2,\ldots,q_n]&=[q_1,\ldots,q_{n-1}]q_n+[q_1,\ldots,q_{n-2}] \text{~ if~} n\ge 3.
\end{align*}
The value $[Q]$ is called the \emph{continuant} of the sequence
$Q$.
\end{definition}

A sequence $(q_1,q_2,\ldots,q_n)$ of quotients given by the Euclidean algorithm on $\tau_1$ and $\tau_2$, with $\tau_1$ and $\tau_2$ in $R$, is called a
\emph{continuant representation} of $(\tau_1,\tau_2)$ as we have the equalities
$\tau_1=[q_1,q_2,\ldots,q_n]h$ and
$\tau_2=[q_2,\ldots,q_n]h$
unless $\tau_2=0$.  If $\tau_2\ne 0$, then $h$ is  a gcd of $(\tau_1,\tau_2)$, else $h=\tau_1$;
in other words
$R\tau_1+R\tau_2=Rh$, where $R\tau$ denotes the left ideal generated by $\tau$.

Continuants have prominently featured in the literature.  For commutative rings many continuant properties are  given in \cite[Sec.~6.7]{GKP94}, while for non-commutative rings a careful study is presented in \cite{Wed1913}. For applications of continuants to representations of integers by quadratic forms, see \cite{Bri72,Bar 72,Wil1995,Wil1994,HMW90}. In all these papers, continuants have featured as numerators (and denominators) of continued fractions. For instance, the continuant $[q_1,q_2,q_3]$ equals the numerator of the continued fraction $q_1 + \cfrac{1}{q_2+\cfrac{1}{q_3}}$, while the continuant $[q_2,q_3]$ equals its denominator.

Let $p$ be a prime  number of the form $4k+1$.
Smith's approach \cite{CELV99} relies on the existence of a palindromic sequence
$Q=(q_1,\ldots , q_s,q_s,\ldots ,q_1)$ of even length such that $p=[Q]$.
He then derives a solution $z_0$ for $z^2+1\equiv0 \pmod{p}$ with $2\le z_0\le p/2$, namely
$[q_2,\ldots , q_s,q_s,\ldots ,q_1]$. On the other hand,
from $z_0$ one can retrieve the palindromic sequence by applying the
Euclidean algorithm to $p$ and $z_0$, and then $p=x^2+y^2$ where $x=[q_1,\ldots,q_s]$ and
$y=[q_1,\ldots, q_{s-1}]$.

With regards to the question of finding square roots modulo a prime $p$, a deterministic algorithm can be found in \cite{Sch1985}. The paper \cite{Wag90} also discusses the topic.

Brillhart's optimisation \cite{Bri72} on Smith's construction took full advantage
of the palindromic structure of the sequence \[(q_1,\dots,q_{s-1},q_s,q_s,q_{s-1},\dots,q_1)\]
given by the Euclidean algorithm on $p$ and $z_0$, a solution of $z^2+1\equiv0\pmod{p}$. He noted that the Euclidean
algorithm gives the remainders
\begin{align*}
r_i&=[q_{i+2},\dots,q_{s-1},q_s,q_s,q_{s-1},\dots,q_1] \text{ ($i=1,\ldots,s-2$),}\\
r_i&=[q_{2s-1-i},\dots,q_1] \text{ ($i=s-1,\ldots,2s-2$),}\\
r_{2s-1}&=[\,], \text{and}\\
r_{2s}&=0.
\end{align*}
 so, in virtue of Smith's construction, rather than computing the whole sequence we
need to obtain
\[
\begin{cases}
x&=r_{s-1}=[q_s,q_{s-1},\dots,q_1]\\
y&=r_{s}=[q_{s-1},\dots,q_1].
\end{cases}
\]
In this case, we have $y<x<\sqrt{p}$, Brillhart's stopping criterium.

\subsection{Previous extensions of Fermat's two-square theorem to other rings}

The question of extending Fermat's two-square theorem to other rings has been extensively considered in the literature; see, for instance, \cite{Niv1940,Hsi1973/74,ElM2007,Eli2006,CLR1980,Lea1967,Mor1932,Har1968}.

Quadratic fields have naturally received much attention. Niven \cite{Niv1940} considered imaginary quadratic fields and studied the problem of expressing an integer $a+2b\sqrt{-h}$ as a sum of two squares of integers in the field. Alternative proofs for the case of Gaussian integers (i.e.~ $h=1$) appeared in \cite{Lea65,Mor67,Wil73}. The number of representations of non-zero Gaussian integers as sums of two Gausssian integers was obtained by Pall \cite{Pal51}, and later by Williams \cite{Wil67,Wil71}. Elia \cite{Eli2006} proved that a totally positive integer $m$ in $\mathbb{Q}(\frac{1+\sqrt{5}}{2})$ is a sum of two squares iff in the prime decomposition of $m$ each of its prime factors of field norm congruent to $11,19$ modulo 20 occurs with an even exponent. An integer in a quadratic field is called {\it totally positive} if it and its conjugate are positive. Later, Elia and Monico \cite{ElM2007} obtained a similar result for  totally positive integers  in $\mathbb{Q}(\sqrt{2})$. Deutsch \cite[Thm.~6]{Deu02} also considered the field $\mathbb{Q}(\frac{1+\sqrt{5}}{2})$ and proved that a prime with $-1$ as a quadratic residue has a representation, up to multiplication by a unit, as a sum of two squares of integers in $\mathbb{Q}(\frac{1+\sqrt{5}}{2})$. 

Many results in this area rely on theorems about binary quadratic forms. For instance, Niven's proof of the aforementioned result heavily depends on a theorem by Mordell \cite{Mor1932}. In  \cite{Mor1932} Mordell  gave necessary and sufficient conditions for a positive binary quadratic form $ax^2+2hxy+by^2$ with integral coefficients to be representable as a sum of the squares of two linear forms $a_1x+b_1y$ and $a_2x+b_2y$ with integral coefficients. The number of representations of $ax^2+2hxy+by^2$ in the aforementioned manner was given in \cite{Pal51,Wil72}.  Mordell's result was subsequently extended by Hardy \cite{Har1968} to forms with Gaussian integers as coefficients. See also \cite{LeoWil73,Wil71a,LeoWil74}.

Polynomial rings have also attracted much attention. Hsia \cite{Hsi1973/74} studied the representation of cyclotomic polynomials as the sum of two squares in $K[X]$, where $K$ is an algebraic field.  Leahey \cite{Lea1967} proved a theorem in the same vein as Fermat's two-square theorem for polynomials in $\mathbb{F}[X]$, where $\mathbb{F}$ is a finite field of characteristic different from 2 and $-1$ is a non-square in  $\mathbb{F}$. Leahey's theorem reads as follows:

\begin{theorem}[{\cite{Lea1967}}]
Let $m\in \mathbb{F}[X]$ be a monic polynomial, then  any associate of $m$ is a sum of two squares iff in the prime decomposition of $m$ each of its prime factors of odd degree occurs with an even exponent.
\end{theorem}

Perhaps one of the most important extensions of Fermat's two-square theorem was given by Choi, Lam, Reznick and Rosenberg \cite{CLR1980}. In \cite{CLR1980} Choi et al. proved the following theorem.

\begin{theorem}[{\cite[Thm.~2.5]{CLR1980}}]Let $R$ be an integral domain, $\mathbb{F}_R$ its field of fractions, $-h$ a non-square in $\mathbb{F}_R$ and $R[\sqrt{-h}]$ the smallest ring containing $R$ and $\sqrt{-h}$. 
 
 If both $R$ and $R[\sqrt{-h}]$ are unique factorisation domains, then the following assertions hold.
 
 \begin{enumerate}
\item Any element $m\in R$ representable by the form $x'^2+hy'^2$ with $x',y'\in \mathbb{F}_R$ is also representable by the form $x^2+hy^2$ with $x,y\in R$.

\item Any element $m\in R$ representable by the form $x^2+hy^2$ can be factored into $p_1^2\cdots p_k^2q_1\cdots q_l$, where $p_i,q_j$ are irreducible elements in $R$ and $q_j$ is representable by $x^2+hy^2$ for all $j$.
\item An associate of a non-null prime element $p\in R$ is representable by $x^2+hy^2$ iff $-h$ is a square in $\mathbb{F}_{R/Rp}$, where $\mathbb{F}_{R/Rp}$  denotes the field of fractions of the quotient ring $R/Rp$.
\end{enumerate}
\end{theorem}

\subsection{Our work}

In this paper we study  \emph{proper} representations $x^2+y^2$ (that is, with $x$ and $y$ coprime)
in
some Euclidean rings via continuants. Specifically, we concentrate on the following problems. Below a unit in the ring is denoted by $u$.

\begin{problem}[From $x^2+y^2$ to $z^2+1$]If $m=u(x^2+y^2)$ and $x,y$ are coprime, can we find $z$ such that $z^2+1$ is a multiple of $m$ using continuants?
\end{problem}

\begin{problem}[From $z^2+1$ to $x^2+y^2$] If $m$ divides $z^2+1$, can we find $x,y$ such that $m=u(x^2+y^2)$ using continuants?
\end{problem}

As far as we know, this paper presents for the first time the application of continuants to representations in Euclidean rings other than the integers. Specifically, we present the following new deterministic algorithms for the form $Q(x,y)=x^2+y^2$. 
\begin{enumerate}
\item Algorithm \ref{DivAlg_FromQ(x,y)ToQ(z,1)}: for every $m$ in a commutative Euclidean ring, it finds a solution $z_0$ of $Q(z,1)\equiv0\pmod{m}$, given a representation $uQ(x,y)$ of $m$.
\item Algorithm \ref{PolyDivAlg}: for every polynomial $m\in \mathbb{F}[X]$, where $\mathbb{F}$ is a field of odd characteristic, it finds a proper representation $uQ(x,y)$ of $m$, given a solution $z_0$ of $Q(z,1)\equiv0\pmod{m}$. 
\end{enumerate}

As an application of continuants, we provide a new constructive proof of the four-square theorem (Section \ref{4sq}). Many proofs of this theorem can be found in the literature; see, for instance, \cite[Sec.~20.5, 20.9, 20.12]{HW08} and \cite{Hir87, Sma82,AndEkhZei93,SpeWil00,AlaAlaLem07}. 

Furthermore, we use continuants to prove a number of other results about quaternary forms which represent all integers (Section \ref{otherform}).

From the outset we emphasise that Smith's approach heavily depends on the existence of a Euclidean-like division algorithm and that, if one tries to extend it to other Euclidean rings $R$, the uniqueness
of the continuant representation may be lost. The uniqueness of the
continuant representation boils down to the uniqueness of the quotients
and the remainders in the division algorithm. This uniqueness is achieved
only when $R$ is a field or $R = \F[X]$, the polynomial algebra over a field $\F$ (considering the degree as the Euclidean function) \cite{Jod67}. Note that in $\Z$ the uniqueness is guaranteed by requiring the remainder to be nonnegative.

The rest of the paper is structured as follows. In Section \ref{sec:Continuants} we  study
properties of continuants in arbitrary rings.
Section \ref{sec:EucRings} is devoted to studying proper representations $x^2+y^2$ in some Euclidean rings.
We examine later some representations $x\overline x+y\overline y$ using rings with
an anti-automorphism $x\mapsto \overline{x}$ (Sections~\ref{4sq} and \ref{otherform}).
\section{Continuants}
\label{sec:Continuants}

In this section we derive some properties of continuants from
Definition~\ref{def:continuant},
which we will refer to as continuant properties. Many of these properties are already known; see \cite[Sec.~ 6.7]{GKP94} and \cite{Wed1913}.

\begin{mylist}
\item \label{PPeuler}
The first property is the so-called ``Euler's rule'' \cite[pp.~72]{Dav08}: Given a sequence $Q$, compute all the products of subsequences of $Q$ obtained by removing
disjoint pairs of consecutive elements of $Q$. Then the continuant
$[Q]$  is given by the sum of all such products. The empty product is 1, as usual.
\begin{example}
Consider $Q=(q_1,q_2,q_3,q_4,q_5)$. Then the products of relevant subsequences are: $q_1q_2q_3q_4q_5$, $q_3q_4q_5$, $q_1q_4q_5$, $q_1q_2q_5$, $q_1q_2q_3$, $q_5$, $q_3$, and $q_1$. Thus, the continuant is\begin{align*}[Q]=&{} q_1q_2q_3q_4q_5+q_3q_4q_5+q_1q_4q_5+q_1q_2q_5+q_1q_2q_3+\\
&+q_5+q_3+q_1.\end{align*}
\end{example}
\item \label{PPzigzag}
If in a ring $R$ we find a unit $\tau$ commuting with all $q_i$'s, then
\[
[\tau ^{-1}q_1,\tau q_2,\ldots, \tau ^{(-1)^k}q_k,\ldots, \tau  ^{(-1)^n}q_n]=\begin{cases}
[q_1,\ldots,q_n] &\text {if $n$ even}\\
\tau ^{-1}[q_1,\ldots,q_n] &\text {if $n$ odd}
\end{cases}
\]

\item\label{PPcutting}
 $[q_1,\ldots, q_n]=[q_1,\ldots, q_{i-1}][q_{i+2},\ldots, q_n]+[q_1,\ldots, q_{i}][q_{i+1},\ldots, q_n]$. To obtain this equality,  it suffices to
  divide  the products of subsequences of $Q=(q_1,q_2,\ldots,q_n)$
obtained by removing disjoint pairs of consecutive elements of $Q$ into two groups,
depending on whether the pair $q_iq_{i+1}$ ($1\le i<n$) has been removed or not.

\item \label{PPbackto0}
From the previous points it follows
\begin{multline*}
[-q_h,-q_{h-1},\ldots,-q_1,0,q_1,q_2,\ldots, q_n]=\\
\begin{cases}
    [q_{h+2},q_{h+3},\ldots, q_n]&  \text{~for~} 0\le h\le n-2\\
    1& \text{~if~} h=n-1\\
    0& \text{~if~} h=n\end{cases}
\end{multline*}

\item\label{PPcoprime}
  $[q_1,\ldots, q_n]$ and $[q_1,\ldots, q_{n-1}]$ are coprime.
\end{mylist}

From Properties P--\ref{PPzigzag}
and P--\ref{PPbackto0}, we can derive more identities, for instance, the following.

\begin{mylist}

\item \label{PPBezout}
$[-q_{n-1},\ldots, -q_1,0][q_1,\ldots,q_n]+[-q_{n-1},\ldots, -q_1][q_2,\ldots,q_n]=1$.

\item \label{PPWed} Property P--\ref{PPBezout} is equivalent to
\[[-q_{n-1},\ldots, -q_2][q_1,\ldots,q_n]+[-q_{n-1},\ldots, -q_1][q_2,\ldots,q_n]=1,\]
which is in turn equivalent to
\[[q_{n-1},\ldots, q_2][q_1,\ldots,q_n]-[q_{n-1},\ldots, q_1][q_2,\ldots,q_n]=(-1)^n\]
\end{mylist}
This last property first appeared in Theorem 3 of \cite{Wed1913}, where other variants were also presented.

If the ring $R$ is commutative, then we have some additional properties.
\begin{mylist}
\item\label{PPreverse}
$[q_1,q_2,\ldots,q_n]=[q_n,\ldots,q_2,q_1]$.

\item\label{PPdeterminant}
 The continuant $[q_1,\ldots, q_n]$
is the determinant of the tridiagonal $n\times n$ matrix $A=(a_{ij})$ with $a_{i,i}=q_i$
for $1\le i\le n$, $a_{i,i+1}=1$ and $a_{i+1,i}=-1$ for $1\le i<n$.
\end{mylist}

The following identity, due to Lewis Carroll (alias Charles Lutwidge Dodgson), 
plays an important role in our study of continuants.

\begin{lemma}[Lewis-Carroll's identity, {\cite{Dod1866}}] \label{lemm:LC_Ident}
Let $C$ be an $n\times n$ matrix in a commutative ring. Let $C_{i_1,\ldots, i_s;j_1,\ldots, j_s}$
denote the matrix obtained from $C$ by omitting the rows $i_1,\ldots, i_s$ and the columns
$j_1,\ldots, j_s$. Then
\[
\det(C)\det(C_{i,j;i,j})=\det(C_{i;i})\det(C_{j;j})-\det(C_{i;j})\det(C_{j;i})
\]
where  the determinant of the
0$\times$0 matrix is 1 for convenience.
\end{lemma}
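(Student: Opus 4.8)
The plan is to prove the Lewis Carroll identity, which is a special case of the Desnanot–Jacobi adjoint matrix theorem, and the cleanest route is through the adjugate matrix and Jacobi's formula for minors of the inverse. Specifically, I would recall that for any $n \times n$ matrix $C$ over a commutative ring, the adjugate $\mathrm{adj}(C)$ satisfies $C\,\mathrm{adj}(C) = \det(C)\,I$, and that the $(i,j)$ entry of $\mathrm{adj}(C)$ is the signed cofactor $(-1)^{i+j}\det(C_{j;i})$. The four quantities $\det(C_{i;i})$, $\det(C_{j;j})$, $\det(C_{i;j})$, $\det(C_{j;i})$ appearing on the right-hand side are precisely (up to sign) the entries of $\mathrm{adj}(C)$ indexed by the pairs $(i,i)$, $(j,j)$, $(i,j)$, $(j,i)$, so the right-hand side is, up to a sign, a $2 \times 2$ minor of the adjugate matrix.

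The key step is Jacobi's theorem on minors of the adjugate: any $s \times s$ minor of $\mathrm{adj}(C)$ equals $\det(C)^{s-1}$ times the complementary cofactor of $C$. I would apply this in the case $s = 2$ to the minor of $\mathrm{adj}(C)$ formed from rows $\{i,j\}$ and columns $\{i,j\}$. That minor expands as $\det(C_{i;i})\det(C_{j;j}) - \det(C_{i;j})\det(C_{j;i})$ after the signs from the cofactors cancel (the four signs combine to $+1$ because the exponents pair up as $(i+i)+(j+j)$ versus $(i+j)+(j+i)$, both even sums), while Jacobi's formula identifies it with $\det(C)^{2-1}\cdot\pm\det(C_{i,j;i,j}) = \det(C)\det(C_{i,j;i,j})$. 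Matching the two expressions yields exactly the claimed identity.

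An alternative, more self-contained route avoids quoting Jacobi and instead argues by a direct manipulation. First I would treat the generic case where $\det(C)$ is invertible, working over the field of fractions or the localization; there one writes the four bordered minors in terms of entries of $C^{-1}$ via Cramer's rule, reduces the identity to a $2 \times 2$ determinant statement about $C^{-1}$, and clears denominators. Then I would remove the invertibility hypothesis by the standard universal-coefficients argument: the identity is a polynomial identity in the $n^2$ matrix entries with integer coefficients, so it suffices to verify it over the polynomial ring $\Z[x_{11},\ldots,x_{nn}]$ with the generic matrix, where $\det$ is a nonzerodivisor, and it then holds in every commutative ring by specialization.

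The main obstacle is bookkeeping the signs correctly: one must be careful that the cofactor signs $(-1)^{i+j}$ attached to each of the four deleted minors combine to leave the right-hand side sign-free, and that the complementary minor in Jacobi's formula carries the sign $(-1)^{(i+j)+(i+j)} = +1$ so that $\det(C_{i,j;i,j})$ appears with a plus. I would handle this by fixing $i < j$ without loss of generality (the statement is symmetric in the roles, and the general case follows by relabeling) and tracking the positions carefully, rather than attempting the fully general index arrangement at once.
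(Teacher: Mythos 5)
The paper never proves this lemma at all: it is stated as classical background (Dodgson's identity, also known as Desnanot--Jacobi) and used as a black box, the only identity of this type the paper actually proves being its noncommutative continuant analogue, Lemma~\ref{lemm:LC_IdentNonComm}, by induction on the length of the sequence. So your proposal supplies a proof where the paper supplies none, and in outline it is correct; your sign bookkeeping for the four cofactors is also right. Two caveats. First, your primary route is very close to circular: the $s=2$ case of Jacobi's theorem on minors of the adjugate \emph{is} the Desnanot--Jacobi identity (your own computation shows the two statements coincide term by term), so that route amounts to citing a strictly more general classical theorem in place of the one to be proved. Second, your fallback route is the one that does real work, and its specialization step is exactly what the paper's hypotheses demand: the identity is a polynomial identity with $\Z$-coefficients in the $n^2$ entries, so it suffices to verify it for the generic matrix over $\Z[x_{11},\dots,x_{nn}]$, where $\det C$ is a nonzerodivisor, and it then descends to every commutative ring --- where $\det C$ may be zero or a zero divisor, so no argument assuming invertibility can stand on its own. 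However, your reduction ``via Cramer's rule to a $2\times 2$ statement about $C^{-1}$'' is again the $s=2$ Jacobi formula in disguise; to keep the argument genuinely self-contained, replace it with the column-substitution trick: let $X$ be the identity matrix with columns $i$ and $j$ replaced by the corresponding columns of $\mathrm{adj}(C)$; then $\det X=\mathrm{adj}(C)_{ii}\,\mathrm{adj}(C)_{jj}-\mathrm{adj}(C)_{ij}\,\mathrm{adj}(C)_{ji}=\det(C_{i;i})\det(C_{j;j})-\det(C_{i;j})\det(C_{j;i})$, while $CX$ equals $C$ with columns $i,j$ replaced by $\det(C)e_i,\det(C)e_j$, so $\det(CX)=\det(C)^2\det(C_{i,j;i,j})$; comparing via $\det(CX)=\det C\cdot\det X$ and cancelling the nonzerodivisor $\det C$ gives the identity for the generic matrix, and specialization finishes the proof.
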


The use of Lewis-Carroll's identity and Property P--\ref{PPdeterminant}
provides more properties.
\begin{mylist}
\item\label{PPlewiscarroll}
 $[q_1,q_2,\ldots, q_{n}][q_2,\ldots , q_{n-1}]= [q_1,\ldots,q_{n-1}][q_2,\ldots , q_n]+(-1)^{n}$
(when $n\ge 2$).
\item\label{PPpalindro}
 In the case of even $n$ with $q_i=q_{n+1-i}$ for $1\le i\le n$ (i.e~
if the sequence is \emph{palindromic}), we have
    \begin{align*}
    &[q_1,\ldots,q_{n/2},q_{n/2},\ldots, q_2]^2+1=\\
 &[q_1,\ldots,q_{n/2},q_{n/2},\ldots, q_1]
[q_2,\ldots,q_{n/2},q_{n/2},\ldots, q_2]=\\
&([q_1,\ldots,q_{n/2}]^2+[q_1,\ldots, q_{n/2-1}]^2)
([q_2,\ldots,q_{n/2}]^2+[q_2,\ldots, q_{n/2-1}]^2)
 \end{align*}
\end{mylist}

Note that Property P-\ref{PPlewiscarroll} also follows from Properties P--\ref{PPWed} and P--\ref{PPreverse}. More properties and proof techniques
for the commutative case are given in \cite[Sec.~6.7]{GKP94}
\subsection{Quasi-palindromic sequences}
Here again the rings are not necessarily commutative.
\begin{definition}\label{antimorph}
An \emph{anti-automorphism} of a ring $R$
is an involution $\tau\mapsto  \overline \tau$ such that
$\overline{\tau+\sigma}=\overline{\tau}+\overline{\sigma}$ and
$\overline{\tau\sigma}=\overline \sigma\,\overline \tau$ for all
elements $\tau$, $\sigma$ of $R$.
\end{definition}

\begin{definition}\label{quasipal}
Let $R$ be a ring endowed with an anti-auto\-mor\-phism $\tau\mapsto \overline \tau$.
A \emph{quasi-palin\-dro\-mic} sequence of length $n$
satisfies $q_i=\overline{ q_{n+1-i}}$ for $1\le i\le n$;  in particular,
if $n$ is odd the element $q_{(n+1)/2}$ satisfies
$q_{(n+1)/2}=\overline{q_{(n+1)/2}}$.
\end{definition}
We have  an obvious relation.
\begin{mylist}
\item\label{PPantiauto}
$[\overline {q_n},\ldots, \overline{q_1}]=\overline{[q_1,\ldots q_n]}$
\end{mylist}
and counterparts of  Properties P--\ref{PPlewiscarroll} and P--\ref{PPpalindro}.

\begin{lemma}[Noncommutative Lewis-Carroll-like identity]
\label{lemm:LC_IdentNonComm}
 Let $\tau \mapsto \overline{\tau}$ be an anti-auto\-mor\-phism in a ring $R$,
which also satisfies the conditions
\begin{equation}\label{special}
\begin{cases} \tau\overline{\tau}=\overline{\tau}\tau, \;\text{and}\\
 \text{if~}\overline{\tau}=\tau \text{~then~}\tau\text{~belongs to the centre of~} R.
 \end{cases}
\end{equation}
Let  $(q_1,\ldots q_n)$ be a quasi-palin\-dro\-mic sequence of length $n\ge 2$ in $R$.
The following relation holds

$\begin{array}{l@{{}={}}l}[q_1,\ldots, q_n][q_2,\ldots, q_{n-1}]&
[q_2,\ldots, q_n][q_1,\ldots, q_{n-1}]+(-1)^n\\
&[q_1,\ldots, q_{n-1}][q_2,\ldots, q_n]+(-1)^n.
\end{array}$
\end{lemma}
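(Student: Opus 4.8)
The plan is to split the statement into its two asserted equalities. Writing $A=[q_1,\dots,q_n]$, $B=[q_2,\dots,q_{n-1}]$, $C=[q_2,\dots,q_n]$ and $D=[q_1,\dots,q_{n-1}]$, the claim reads $AB=CD+(-1)^n=DC+(-1)^n$. The equality $CD+(-1)^n=DC+(-1)^n$ is merely the commutation $CD=DC$, while $AB=DC+(-1)^n$ is the substantive part; I would treat the former first and the latter by induction on $n$.

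For the commutation part I would feed the quasi-palindromic relation $\overline{q_i}=q_{n+1-i}$ into P--\ref{PPantiauto}. This yields $\overline B=B$ and $\overline D=[q_2,\dots,q_n]=C$ (hence $\overline C=D$) for every $n\ge 2$. The first line of \eqref{special} applied to $\tau=D$ then gives $D\overline D=\overline D D$, i.e. $DC=CD$, which is exactly the equality of the two right-hand sides. The same computation shows $\overline B=B$, so by the second line of \eqref{special} the element $B$ lies in the centre of $R$; this centrality is the fact that will drive the inductive step.

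To prove $\delta_n:=AB-DC=(-1)^n$ I would induct in steps of two. First I would observe that $\overline{q_i}=q_{n+1-i}$ maps the index block $\{2,\dots,n-1\}$ onto itself reversing its order, so the inner sequence $(q_2,\dots,q_{n-1})$ is again quasi-palindromic, now of length $n-2$. For $n\ge 4$ I would expand $A$, $C$, $D$ about this inner block by combining the defining right recurrence with the left recurrence $[q_1,\dots,q_n]=q_1[q_2,\dots,q_n]+[q_3,\dots,q_n]$ (the case $i=1$ of P--\ref{PPcutting}). With the abbreviations $E=[q_2,\dots,q_{n-2}]$, $F=[q_3,\dots,q_{n-1}]$, $G=[q_3,\dots,q_{n-2}]$ this gives $A=q_1Bq_n+q_1E+Fq_n+G$, $C=Bq_n+E$ and $D=q_1B+F$. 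Substituting into $\delta_n=AB-DC$ and using the centrality of $B$ to clear every term carrying a factor $q_1$ or $q_n$, all of these cancel in pairs and one is left with the reduction $\delta_n=BG-FE$.

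It then remains to close the induction. Since $(q_2,\dots,q_{n-1})$ is quasi-palindromic of length $n-2\ge 2$, the inductive hypothesis applies to it and delivers both $BG-EF=(-1)^{n-2}=(-1)^n$ and its own commutation clause $EF=FE$ (equivalently $\overline E=F$ together with \eqref{special}); combining these gives $\delta_n=BG-FE=BG-EF=(-1)^n$. The two residual base cases $n=2$ and $n=3$ I would dispatch by direct expansion, the case $n=3$ being the first place the centrality clause of \eqref{special} is needed, since the fixed middle term $q_2=\overline{q_2}$ must commute with $q_1$ and $q_3$. I expect the genuine difficulty to lie precisely in the shape of the induction: the one-step reduction that works in the commutative setting (P--\ref{PPlewiscarroll}) would replace a quasi-palindromic sequence of length $n$ by a non-quasi-palindromic prefix of length $n-1$, after which \eqref{special} could no longer be invoked and the stray commutators $q_k[q_2,\dots,q_{k-1}]-[q_2,\dots,q_{k-1}]q_k$ would fail to vanish. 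Peeling symmetrically from both ends is what keeps the hypotheses available, at the modest cost of carrying a second base case.
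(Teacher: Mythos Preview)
Your proposal is correct and follows essentially the same route as the paper's proof: the paper also treats the base cases $n=2,3$, uses $\overline{[q_1,\dots,q_{n-1}]}=[q_2,\dots,q_n]$ together with \eqref{special} for the commutation $CD=DC$, observes that $[q_2,\dots,q_{n-1}]$ is central, and reduces via the same left/right expansions to the quasi-palindromic subsequence $(q_2,\dots,q_{n-1})$ of length $n-2$. Your closing remark explaining why a one-step induction would fail is a helpful addition, but the underlying argument coincides with the paper's.
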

\begin{proof} We  proceed by induction on $n$.
Our basic cases are $n=2,3$. The result is clearly true for $n=2$.

For $n=3$, since $q_2$ is in the centre of $R$ and $q_1$ commutes with $q_3$, from
\begin{align*}
[q_1,q_2][q_2,q_3]-1&=(q_1q_2+1)(q_2q_3+1)-1\\
&=q_1q_2q_2q_3+q_1q_2+q_2q_3\\
&=q_1q_2q_2q_3+q_1q_2+q_2q_3
\end{align*}
we obtain $q_1q_2q_2q_3+q_1q_2+q_2q_3=[q_2,q_3][q_1,q_2]-1=[q_1,q_2,q_3]q_2$.

For larger $n$, write $E=[q_2,\ldots,q_{n-1}]$ and
$F=[q_3,\ldots,q_{n-2}]$. We want to prove that 
$\begin{array}{l@{{}={}}l}[q_1,\ldots, q_n]E
&[q_1,\ldots, q_{n-1}][q_2,\ldots, q_n]+(-1)^n.
\end{array}$

Observe that $E$ and $F$ belong to the centre of $R$, and
that the following results come from the definition of continuant and Property P--\ref{PPcutting}.

On one hand, we have that  
\begin{align*}
[q_1,\ldots,q_{n-1}][q_2,\ldots,q_n]&=
(q_1 E + [q_3,\ldots, q_{n-1}])(E q_n + [q_2,\ldots, q_{n-2}])\\
&=q_1E^2q_n+q_1E[q_2,\ldots,q_{n-2}]+
[q_3,\ldots, q_{n-1}]E q_n\\
&\quad+[q_3,\ldots, q_{n-1}][q_2,\ldots, q_{n-2}].
\end{align*}
On the other hand, we have that
\begin{align*}
[q_1,q_2,\ldots,q_{n-1}, q_n]E&=
(q_1[q_2,\ldots,q_{n-1}, q_n]+[q_3,\ldots, q_n])E\\
&=\left(q_1( E q_n +[q_2,\ldots,q_{n-2}])+ [q_3,\ldots, q_{n-1}]q_n+F\right)E\\
&=q_1E q_nE+q_1[q_2,\ldots,q_{n-2}]E+[q_3,\ldots, q_{n-1}]q_nE\\
&\quad +FE.
\end{align*}

First note that $[q_2,\ldots,q_{n}][q_1,\ldots,q_{n-1}]=[q_1,\ldots,q_{n-1}][q_2,\ldots,q_n]$
because of the equality  $ [q_1, \ldots,q_{n-1}] =  \overline {[q_2,\ldots ,q_n]}$.

Since $E=\overline{E}$, $E$ commutes with the whole $R$ and we have
\begin{align*}
q_1 E^2 q_n&=q_1 E q_n E\\
 q_1 E [q_2,\ldots, q_{n-2}]&=q_1[q_2,\ldots,q_{n-2}]E,\text{ and}\\
[q_3,\ldots, q_{n-1}]Eq_n&=[q_3,\ldots, q_{n-1}]q_n E.
\end{align*}

It only remains to check
\begin{align*}
EF&=[q_2,\ldots, q_{n-2}][q_3,\ldots,q_{n-1}]+(-1)^n\\
&=[q_3,\ldots, q_{n-1}][q_2,\ldots, q_{n-2}]+(-1)^n,
\end{align*}
but these equalities follows from the inductive hypothesis.
\end{proof}

\begin{remark}\label{revanti}
For a quasi-palindromic sequence $Q$ of length $n\ge 3$, we have
\begin{align*}
[q_1,q_2,\ldots,q_{n-1}]&=q_1[q_2,\ldots,q_{n-1}]+[q_3,\ldots,q_{n-1}]\\
&=q_1[q_2,\ldots,q_{n-1}]+\overline{[q_2,\ldots,q_{n-2}]}
\end{align*}
\end{remark}

\section{Proper representations in Euclidean rings}
\label{sec:EucRings}

As said before, if one tries to extend Smith's approach to other Euclidean rings $R$, the uniqueness of the continuant representation may be lost. Given two elements $m,z\in R$, the uniqueness of the
continuant representation of $(m,z)$ is necessary to recover representations $m=x\overline{x}+y\overline{y}$  from a multiple $z\overline{z}+1$ of $m$.

\subsection{Euclidean rings not necessarily commutative}
We first use continuants to obtain a multiple $z\overline{z}+1$  of  an element $m$ of
the form $x\overline{x}+y\overline{y}$, with $x,y$ satisfying $Rx+Ry=R$ and
$\tau \mapsto \overline{\tau}$ an anti-automorphism in the ring under consideration.

\begin{theorem}\label{theo:SqSumNonComm} Let $R$ be a Euclidean ring, and let
$\tau\mapsto \overline{\tau}$ be an anti-automor\-phism of $R$ satisfying the conditions (\ref{special}) of Lemma ~\ref{lemm:LC_IdentNonComm}.
If $m \in R$
admits a \emph{proper} representation $m=x\overline{x}+y\overline{y}$ (that is, with
$Rx+Ry=R$),
then the equation $z\overline{z}+1\in Rm$ admits  solutions.

Furthermore, one   of these solutions  is equal to
$[\overline{q_s},\ldots,\overline{q_1},q_1,\ldots,q_{s-1}]$, where
$(q_1,q_{2},\dots,q_{s})$ is a sequence provided by the Euclidean algorithm on
$x$ and $y$.
\end{theorem}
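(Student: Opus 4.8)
The plan is to imitate Smith's construction in this noncommutative setting: run the Euclidean algorithm on $x$ and $y$, fold the quotients into a quasi-palindromic sequence $Q$ whose continuant is an associate of $m$, and then extract both $z$ and the relation $z\overline z+1\in Rm$ from the noncommutative Lewis--Carroll identity (Lemma~\ref{lemm:LC_IdentNonComm}). First I would record the output of the algorithm: applied to $x$ and $y$ it returns quotients $(q_1,\dots,q_s)$ and a gcd $h$ with $x=[q_1,\dots,q_s]\,h$ and $y=[q_2,\dots,q_s]\,h$. Since the representation is proper, $Rx+Ry=R$, so $Rh=R$, and as $R$ has no zero divisors $h$ is a two-sided unit. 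Writing $x'=[q_1,\dots,q_s]$, $y'=[q_2,\dots,q_s]$ and $u=h\overline h$, the identity $\overline{h\overline h}=h\overline h$ together with the second clause of~(\ref{special}) shows $u$ is self-conjugate, hence central, and it is plainly a unit.

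Next I would introduce $Q=(\overline{q_s},\dots,\overline{q_1},q_1,\dots,q_s)$, of even length $n=2s$; the verification that $q_i=\overline{q_{2s+1-i}}$ is immediate, so $Q$ is quasi-palindromic. Splitting $[Q]$ with Property~P--\ref{PPcutting} across the central pair $(\overline{q_1},q_1)$ and identifying the two halves by Property~P--\ref{PPantiauto} gives $[Q]=\overline{x'}\,x'+\overline{y'}\,y'$. The first clause of~(\ref{special}), namely $\tau\overline\tau=\overline\tau\tau$, rewrites this as $x'\overline{x'}+y'\overline{y'}$, and a short computation expanding $m=x\overline x+y\overline y$ and using the centrality of $u$ yields $m=u\,[Q]$, that is $[Q]=u^{-1}m$.

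I would then apply Lemma~\ref{lemm:LC_IdentNonComm} to $Q$ with $n=2s$ even. Dropping the last term gives $[q_1,\dots,q_{n-1}]=[\overline{q_s},\dots,\overline{q_1},q_1,\dots,q_{s-1}]$, which is exactly the claimed $z$; dropping the first term gives $\overline z$ by P--\ref{PPantiauto}; and dropping both ends gives $W=[\overline{q_{s-1}},\dots,\overline{q_1},q_1,\dots,q_{s-1}]$. The identity then reads $[Q]\,W=z\overline z+1$. To land in the left ideal $Rm$ rather than in $mR$, I would conjugate this equation: as continuants of quasi-palindromic sequences, both $[Q]$ and $W$ are self-conjugate (P--\ref{PPantiauto}), and $z\overline z+1$ is self-conjugate, so conjugation turns $[Q]\,W$ into $W\,[Q]$. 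Substituting $[Q]=u^{-1}m$ and using that $u$ is central gives $z\overline z+1=(W u^{-1})\,m\in Rm$, which establishes both the existence of a solution and the explicit formula. The degenerate case $s=1$ (where $Q=(\overline{q_1},q_1)$, $z=\overline{q_1}$ and $W$ is the empty continuant $1$) already satisfies the hypothesis $n\ge 2$ of the lemma and should be checked to match directly.

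The step I expect to be the main obstacle is the bookkeeping around the unit $h$ and the left/right placement of factors. The Euclidean algorithm naturally presents $x$ and $y$ as \emph{right} multiples of continuants, and the Lewis--Carroll identity naturally presents $z\overline z+1$ as $[Q]\,W\in mR$; converting this into the left-ideal membership $z\overline z+1\in Rm$ required by the statement is exactly what forces the use of the self-conjugacy of $[Q]$, $W$ and $m$ and the centrality of $u=h\overline h$ --- precisely the two structural features guaranteed by conditions~(\ref{special}).
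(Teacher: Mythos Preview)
Your proof is correct and follows essentially the same route as the paper: run the Euclidean algorithm to get a unit and the quotient sequence, assemble the quasi-palindromic sequence $Q$, identify $[Q]$ with $u^{-1}m$ via P--\ref{PPcutting} and P--\ref{PPantiauto}, and then read off $z\overline z+1$ from Lemma~\ref{lemm:LC_IdentNonComm}. The only cosmetic difference is in the final commutation step: the paper simply observes that $m=x\overline x+y\overline y$ is self-conjugate and hence central by~(\ref{special}), so $m$ may be moved to the right directly, whereas you conjugate the whole identity and use that $[Q]$ and $W$ are self-conjugate---which amounts to the same thing, since $m=u[Q]$ with $u$ and $[Q]$ both central.
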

\begin{proof} Let $\nm$ denote the Euclidean function of $R$ and let $(x,y)$ (with $\nm(x)\ge \nm(y)$) be a proper representation of $m$.

If $y=0$ then $x$ is a unit, so $m$ must be a unit and the ideal  $Rm$ is the
whole ring $R$. Otherwise, the  Euclidean algorithm on $x$ and $y$ gives a unit $u$
and a sequence $(q_1,q_2,\ldots,q_s)$ such that  $x=[q_1,q_2,\ldots,q_s]u$ and $y=[q_2,\dots,q_s]u$.
Then
\begin{align*}
x\overline{x}&= [q_1,\ldots,q_s]u\overline{u}
[\overline{q_s},\ldots,\overline{q_1}],
\text{~using Property P--\ref{PPantiauto}}\\
x\overline{x}&= [\overline{q_s},\ldots,\overline{q_1}]
[q_1,\ldots,q_s]u\overline{u}, \text{~since $u\overline{u}$ belongs to the centre of $R$ }\\
y\overline{y}&=[\overline{q_s},\ldots,\overline{q_2}][q_2,\ldots,q_s]u\overline{u}\\
m&=x\overline{x}+y\overline{y}=
[\overline{q_s},\ldots,\overline{q_1},q_1,\ldots,q_s]u\overline{u},
\text{~by Property P--\ref{PPcutting}}
\end{align*}
Let $z=[\overline{q_s},\ldots,\overline{q_1},q_1,\ldots,q_{s-1}]$. Then applying Lemma
\ref{lemm:LC_IdentNonComm} we obtain
\begin{multline*}
z\overline z +1=(u\overline{u})^{-1}m
[\overline{q_{s-1}},\ldots,\overline{q_1},q_1,\ldots,q_{s-1}]\\
=(u\overline{u})^{-1}[\overline{q_{s-1}},\ldots,\overline{q_1},q_1,\ldots,q_{s-1}]m\\
\text{since $m$ is in the center of $R$}
\end{multline*}
That is, $z$ satisfies $z\overline{z}+1\in Rm$, which completes
the proof of the theorem.
\end{proof}

\subsection{Commutative rings: from   $x^2+y^2$ to $z^2+1$}
 In this subsection we deal with the problem of going from a representation $x^2+y^2$ of an associate of an element $m$ to a multiple $z^2+1$ of $m$. We begin with a very general remark valid in every commutative ring.

\begin{corollary}\label{direct}
In a commutative ring $R$, if $Rx+Ry=R$ then there exists some $z\in R$ such that $x ^2+y^2$
divides $z^2+1$.

If $R$ is Euclidean, we can explicitly find $z$  and the quotient $(z^2+1)/(x^2+y^2)$ with continuants.
\end{corollary}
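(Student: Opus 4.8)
The plan is to treat the two assertions separately, since the first holds for an arbitrary commutative ring while the second uses the Euclidean structure.

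For the existence statement I would argue directly with congruences modulo $x^2+y^2$. Since $x^2+y^2\equiv 0$, we have $x^2\equiv -y^2\pmod{x^2+y^2}$, so it suffices to produce an inverse of $y$ modulo $x^2+y^2$; a suitable multiple of $x$ is then a square root of $-1$. Concretely, from $Rx+Ry=R$ I would choose $a,b$ with $ax+by=1$ and square this relation to get $a^2x^2+y(2abx+b^2y)=1$, which shows $Rx^2+Ry=R$. Because $x^2=(x^2+y^2)-y^2$, this upgrades to $R(x^2+y^2)+Ry=R$, so there is $c\in R$ with $cy\equiv 1\pmod{x^2+y^2}$. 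Setting $z=cx$ and using $x^2\equiv -y^2$ gives $z^2=c^2x^2\equiv -(cy)^2\equiv -1$, whence $x^2+y^2$ divides $z^2+1$. Note that the spurious factor $2$ is harmless: the identity $a^2x^2+y(2abx+b^2y)=1$ holds in any commutative ring, so no assumption on the characteristic is needed here.

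For the explicit form when $R$ is Euclidean, I would invoke Theorem \ref{theo:SqSumNonComm} with the identity map as anti-automorphism: in a commutative ring $\tau\mapsto\tau$ is an involution satisfying $\overline{\tau\sigma}=\overline{\sigma}\,\overline{\tau}$, and conditions (\ref{special}) hold trivially, since every element equals its conjugate and lies in the centre. Applying the theorem to the proper representation $m=x^2+y^2$ and reading $\overline{q_i}=q_i$ throughout, one obtains
\[
z=[q_s,\ldots,q_1,q_1,\ldots,q_{s-1}],
\]
where $(q_1,\ldots,q_s)$ is the sequence produced by the Euclidean algorithm on $x$ and $y$. The cofactor is then read off from the last display in the proof of that theorem: with $u$ the unit for which $x=[q_1,\ldots,q_s]u$ and $y=[q_2,\ldots,q_s]u$, it equals
\[
\frac{z^2+1}{x^2+y^2}=u^{-2}\,[q_{s-1},\ldots,q_1,q_1,\ldots,q_{s-1}].
\]

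The continuant manipulations required here are already packaged in Property P--\ref{PPcutting} and in Lemma \ref{lemm:LC_IdentNonComm}, so the second part needs no fresh computation. The one step that deserves care---and which I regard as the only genuine obstacle---is the coprimality argument in the general part: showing that $y$ becomes a unit modulo $x^2+y^2$. This is precisely where the hypothesis $Rx+Ry=R$ enters essentially, via the passage from coprimality of $x$ and $y$ to coprimality of $x^2$ and $y$; without it the quotient $R/(x^2+y^2)$ need not admit a square root of $-1$.
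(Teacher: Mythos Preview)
Your argument is correct on both counts. For the Euclidean assertion you specialise Theorem~\ref{theo:SqSumNonComm} to the identity anti-automorphism, which is exactly what the paper intends (hence the label ``Corollary''); the paper's subsequent paragraph merely recasts the same computation via Property P--\ref{PPdeterminant} and P--\ref{PPlewiscarroll}, and in fact your version is slightly more careful in tracking the unit $u$.

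For the general commutative assertion the paper offers no argument at all, so your congruence computation (invert $y$ modulo $x^2+y^2$, then take $z=cx$) genuinely supplies what is missing. A shorter alternative you might prefer: from $ax+by=1$ the two-square product identity gives
\[
(bx-ay)^2+(ax+by)^2=(a^2+b^2)(x^2+y^2),
\]
so $z=bx-ay$ already satisfies $z^2+1=(a^2+b^2)(x^2+y^2)$, exhibiting both $z$ and the cofactor $a^2+b^2$ in one line without first inverting $y$. Your route and this one are equivalent in strength; the one-line version just avoids the intermediate step through $Rx^2+Ry=R$.
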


This relation can be interpreted using Lewis-Carroll's identity.
The determinant of the tridiagonal matrix $A$ associated with the palindromic sequence
$(q_s,\ldots,q_1,\allowbreak q_1,\ldots,q_s)$
(see property P--\ref{PPdeterminant} of continuants)
is $x^2+y^2$ with $x=[q_1,\ldots, q_s]$ and $y=[q_2,\ldots, q_s]$ if  $s\ge1$.

Moreover,
$(x^2+y^2)([q_1,\ldots q_{s-1}]^2+[q_2,\ldots q_{s-1}]^2)= z^2+1$,
where  $z$ is the determinant of matrix formed by the $2s-1$ first rows and columns of $A$ (see properties P--\ref{PPlewiscarroll} and P--\ref{PPreverse}). These remarks can readily be converted into a deterministic algorithm; See Algorithm \ref{DivAlg_FromQ(x,y)ToQ(z,1)}.

\begin{algorithm}[ht]
\SetKwData{Left}{left}\SetKwData{This}{this}\SetKwData{Up}{up}
\SetKwFunction{Union}{Union}\SetKwFunction{FindCompress}{FindCompress}
\SetKwInOut{Input}{input}\SetKwInOut{Output}{output}
\SetKw{Init}{initialisation:}
\SetKw{Ass}{assumptions:}
\Input{A commutative Euclidean ring $R$.\\An element $m\in R$.\\A proper representation $uQ(x,y)$ of $m$, where $Q(x,y)=x^2+y^2$.}
\Output{A solution $z_0$ of $Q(z,1)\equiv0\pmod{m}$ with $\nm(1)\le \nm(z_0)$.}
\tcc{Apply the Euclidean algorithm to $x$ and $y$ and obtain a sequence $(q_1,\ldots,q_s)$ of quotients.}
$s\leftarrow 0$\;
$m_0 \leftarrow m$\;
$r_0 \leftarrow z$\;
\Repeat{$r_{s}= 0$}
{
$s\leftarrow s+1$\;
$m_{s}\leftarrow r_{s-1}$\;
find $q_{s}, r_{s}\in R$ such that $m_{s-1} = q_{s} m_{s} +r_{s}$ with
$\nm(r_{s})<\nm(m_{s})$\;
}
$z_0 \leftarrow [q_s,q_{s-1},\ldots,q_1,q_1,q_2,\ldots,q_{s-1}]$\;
\Return $z_0$
\caption{Deterministic algorithm for constructing a solution $z_0$ of $Q(z,1)\equiv0\pmod{m}$, given a representation $uQ(x,y)$ of an element $m$.}
\label{DivAlg_FromQ(x,y)ToQ(z,1)}
\end{algorithm}

\subsection{Commutative rings: from $z^2+1$ to $x^2+y^2$}
Here we deal with the problem of going from a solution $z_0$ of $z^2+1\equiv0\pmod{m}$ to a representation $x^2+y^2$ of an associate of $m$.

A natural question goes as follows: if $m$ divides $z^2+1$, does there exist $x,y$ such that
$m=x^2+y^2$? We now give examples showing
that no simple answer is to be expected.

In general, we cannot construct a representation $x^2+y^2$ of an element $m$
from a solution of  $z^2+ 1\equiv 0 \pmod m$. As an illustration, consider
 the Euclidean domain $\F_2[X]$  of polynomials over the field $\F_2$, where $z^2+1$
is a multiple of $m=z+1$ for any polynomial $z$, square or not. Recall that in $\F_2[X]$
the squares, and therefore the sums of squares, are exactly the even polynomials
(i.e.~the coefficient of $X^t$ is null if $t$ is odd). Thus, the converse of Corollary
\ref{direct} is false in $\F_2[X]$. Other examples are  the ring $\Z[i]$ of Gaussian integers and its quotients
by an even integer, since the squares and the sum of squares have an even
imaginary part. Thus, no Gaussian integer with an odd imaginary part is
a sum of squares, although it obviously divides $0=i ^2+1$; see \cite[Sec.~3]{Niv1940}.

However, there are cases where the answer is positive. Propositions \ref{Prop:2Inv}-\ref{applinv} discuss some of these cases.
\begin{proposition}
\label{Prop:2Inv}
Let $R$ be a commutative ring.
If 2 is invertible and $-1$ is a square, say $1+k^2=0$, then
$x=\left(\frac{x+1}2\right)^2+\left(\frac{x-1}{2k}\right)^2$.
\end{proposition}

Variants of Proposition \ref{Prop:2Inv} have appeared previously in the literature. For instance, a variant can be found  in \cite[p.~817]{Lea1967} in the context of finite fields.

\begin{proposition}\label{inverse}
Let $R=\F[X]$ be the ring of polynomials over a field  $\F$  with characteristic different from 2 and let  $-1$ be a non-square in $\F$.

If $m$ divides $z^2+t^2$ with $z,t$ coprime, then $m$ is an associate of some $x^2+y^2$ with $x,y$ coprime.
\end{proposition}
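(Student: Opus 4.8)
The plan is to pass to the quadratic extension $\F(i)=\F[T]/(T^2+1)$ and compare unique factorizations across it. Since $\mathrm{char}\,\F\neq 2$ and $-1$ is a non-square in $\F$, the polynomial $T^2+1$ is irreducible over $\F$, so $\F(i)$ is a field, a separable quadratic extension of $\F$. Its nontrivial automorphism $i\mapsto -i$ extends coefficientwise to an involution $w\mapsto\overline w$ of $\F(i)[X]$, and the associated norm $N(w)=w\overline w$ maps $\F(i)[X]$ into $\F[X]$ with $N(x+iy)=x^2+y^2$ for $x,y\in\F[X]$. Both $\F[X]$ and $\F(i)[X]$ are Euclidean, hence unique factorization domains, and the idea is to realize $m$ as such a norm.

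The first substantive step is to classify how an irreducible $p\in\F[X]$ behaves in $\F(i)[X]$. Because the discriminant of $T^2+1$ is $-4\neq 0$, there is no ramification, so $p$ is either inert (remains prime) or splits as $p=\pi\overline\pi$, up to a unit, with $\pi$ and $\overline\pi$ non-associate conjugate irreducibles. The coprimality hypothesis on $z,t$ then rules out inert divisors of $m$: if $p$ is inert and $p\mid z^2+t^2=(z+it)(z-it)$, then $p$ divides one of the conjugate factors, and since $p\in\F[X]$ this forces $p\mid z$ and $p\mid t$, contradicting $\gcd(z,t)=1$. Hence every irreducible occurring in the factorization of the divisor $m$ is split, and we may write $m=u\prod_k p_k^{e_k}$ with $u\in\F^\times$ and each $p_k=\pi_k\overline{\pi_k}$ up to a unit.

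With this in hand the construction is immediate: set $w=\prod_k\pi_k^{e_k}$, so that $N(w)=w\overline w=\prod_k(\pi_k\overline{\pi_k})^{e_k}$ agrees with $m$ up to a unit of $\F$. Writing $w=x+iy$ yields $m$ associate to $x^2+y^2$, as required. It remains to verify $\gcd(x,y)=1$, and this is the step I expect to be the main obstacle, since it is where non-ramification really gets used. The key observation is that the factors $\pi_k$ and all conjugates $\overline{\pi_j}$ are pairwise non-associate: $\pi_k$ is not associate to $\overline{\pi_k}$ because $p_k$ is unramified, and $\pi_k$ is not associate to $\overline{\pi_j}$ for $k\neq j$ because the distinct irreducibles $p_k,p_j$ are coprime in $\F[X]$, hence in $\F(i)[X]$. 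Consequently no conjugate $\overline{\pi_j}$ divides $w$.

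Finally I would conclude by contradiction: if some nonconstant $d\in\F[X]$ divided both $x$ and $y$, then $d$ would divide $w=x+iy$; choosing an irreducible factor $q$ of $d$, an inert $q$ would have to be associate to one of the $\pi_k$ (impossible, as inert and split primes cannot coincide), while a split $q=\rho\overline\rho$ would force both $\rho$ and $\overline\rho$ to divide $w$ (impossible, since $w$ contains no conjugate pair). Therefore $\gcd(x,y)=1$, completing the argument.
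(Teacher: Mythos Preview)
Your argument is correct and rests on the same core idea as the paper: adjoin a square root $\omega$ of $-1$, factor $z^2+t^2=(z+\omega t)(z-\omega t)$ into coprime conjugate factors in $\G[X]$, and extract $x+\omega y$ from $m$ there. The execution differs, however. You classify the irreducible divisors of $m$ as split versus inert, rule out the inert ones, and then assemble $w=\prod_k\pi_k^{e_k}$ from one prime above each $p_k$; the paper instead takes a single step, setting $x+\omega y=\gcd(m,z+\omega t)$, and uses the general fact that if $m$ divides a product of two coprime elements then $m$ is (up to a unit) the product of its gcd with each factor. The paper's route is shorter and directly constructive (one gcd computation rather than a full factorization), and its coprimality of $x$ and $y$ falls out immediately from the coprimality of $x+\omega y$ and $x-\omega y$. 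Your route, by contrast, makes the splitting behaviour of primes explicit and would transfer verbatim to any separable quadratic extension of a PID, at the cost of more bookkeeping in the final $\gcd(x,y)=1$ step.
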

\begin{proof}
We introduce the extension $\G$ of $\F$
by a square root $\omega$ of $-1$. The ring $\G[X]$ is principal and $z^2+t^2$ factorises as
$(z-\omega t)(z+\omega t)$. The two factors are coprime, since their sum and difference
are respectively $2z$ and $2\omega t$, and $2$ and $\omega$ are units. Introduce
$\gcd(m,z+\omega t)=x+\omega y$, then $x-\omega y$ is a gcd of $m$ and $z-\omega t$ owing to the
natural automorphism of $\G$. The polynomials $x-\omega y$ and $x+\omega y$ are coprime
and both divide $m$. Thus, $m$ is divisible by $(x-\omega y)(x+\omega y)=x^2+y^2$. On the other hand,  $m$ divides
$(z-\omega t)(z+\omega t)$. Consequently,  $(x-\omega y)(x+\omega y)$ is an associate of $m$. Since  $x-\omega y$ and $x+\omega y$ are coprime, we have $x$, $y$ are coprime.
\end{proof}
On one hand, Corollary \ref{direct} and Proposition \ref{inverse} somehow generalise the main theorem of \cite{Lea1967}. On the other hand, in the case of $m$ being prime, Proposition \ref{inverse} is embedded in Theorem 2.5 of \cite{CLR1980}.

\begin{proposition}\label{applinv}
Let $m$ be a non-unit of $\F[X]$ and a divisor of $z^2+1$ for some $z\in \F[X]$ with $\deg(z)<\deg(m)$.

If $\F$ is a field of characteristic different from 2, where  $-1$ is a non-square, then
continuants provide a method for representing $m$ as a sum of squares.

Specifically, the Euclidean algorithm on $m$ and $z$ gives the unit $u$ and the sequence  $(uq_s,u^{-1}q_{s-1},\ldots,u^{(-1)^{s-1}}q_1,u^{(-1)^{s}}q_1,\ldots, u^{-1}q_s)$ such that $x=[q_1,\ldots, q_s]$ and $y=[q_2,\ldots,q_{s}]$.
\end{proposition}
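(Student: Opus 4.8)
The plan is to apply the Euclidean algorithm to the pair $(m,z)$ and to prove that the sequence of quotients it returns is precisely the quasi-palindromic, zig-zag sequence displayed in the statement, of \emph{even} length $2s$; granting this shape, properties P--\ref{PPzigzag} and P--\ref{PPpalindro} deliver the decomposition mechanically.

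First I would note that $m$ and $z$ are coprime: a common divisor would divide both $z^2$ and $z^2+1$, hence divide $1$ (the same remark underlying P--\ref{PPcoprime}). The Euclidean algorithm on $(m,z)$ therefore returns a unit $h$ and a sequence $(q_1',\dots,q_n')$ with $m=[q_1',\dots,q_n']\,h$ and $z=[q_2',\dots,q_n']\,h$. Put $M=[q_1',\dots,q_n']$, $Z=[q_2',\dots,q_n']$ and $Y=[q_1',\dots,q_{n-1}']$. The commutative Lewis--Carroll identity P--\ref{PPlewiscarroll} gives $M\,[q_2',\dots,q_{n-1}']=YZ+(-1)^n$, so $YZ\equiv(-1)^{n-1}\pmod M$, while the hypothesis $m\mid z^2+1$ forces $z^2\equiv-1$, i.e.\ $Z^2\equiv -h^{-2}\pmod M$. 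Combining these, $Y\equiv(-1)^n h^2 Z\pmod M$; since $\deg Y,\deg Z<\deg M$ this congruence is an \emph{equality} $Y=(-1)^n h^2 Z$ of polynomials, so the two convergents $[q_1',\dots,q_{n-1}']$ and $[q_2',\dots,q_n']$ coincide up to the unit $(-1)^n h^2$.

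Next I would exploit reversal: by P--\ref{PPreverse} we also have $M=[q_n',\dots,q_1']$ with second convergent $[q_{n-1}',\dots,q_1']=Y$, so the expansion of $(M,Y)$ has quotient sequence $(q_n',\dots,q_1')$. On the other hand $Y$ is a unit multiple of $Z$, and scaling the second argument by a unit alters the quotients only by an alternating (zig-zag) unit factor, exactly as in P--\ref{PPzigzag}. Since continued-fraction expansions are unique in $\F[X]$ (the remark opening Section~\ref{sec:EucRings}), matching the two descriptions term by term forces $q_{n+1-i}'$ to equal $q_i'$ up to the alternating unit power, i.e.\ the quotient sequence is the zig-zag quasi-palindrome of the statement, with $n=2s$ even. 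With the shape in hand, P--\ref{PPzigzag} (applied with $\tau=u^{-1}$, legitimate because $2s$ is even) strips the unit factors and equates the continuant of the algorithm's sequence with that of the genuine palindrome $(q_s,\dots,q_1,q_1,\dots,q_s)$; cutting this in the middle by P--\ref{PPcutting} and folding via P--\ref{PPreverse} --- that is, P--\ref{PPpalindro} --- evaluates it to $[q_1,\dots,q_s]^2+[q_2,\dots,q_s]^2$. As this continuant is $m$ up to the unit $h$, $m$ is an associate of $x^2+y^2$ with $x=[q_1,\dots,q_s]$ and $y=[q_2,\dots,q_s]$, as claimed.

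The main obstacle is the middle step. Two things must be controlled simultaneously: the scalar (leading-coefficient) units, which in $\F[X]$ destroy exact palindromy and are responsible for the alternating $u^{(-1)^k}$ pattern, and the \emph{parity} of $n$, since P--\ref{PPzigzag} only collapses the zig-zag cleanly when the length is even. It is here that the hypotheses are used: the $+1$ in $z^2+1$ (i.e.\ $z^2\equiv-1$ rather than $z^2\equiv+1$) is what pushes the palindrome into even length, and the assumption that $-1$ is a non-square in $\F$ guarantees that $z^2+1$ does not split over $\F$, so that $x^2+y^2$ is a genuine two-squares form and we are not in a degenerate situation of the kind exhibited before Proposition~\ref{inverse}. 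If one prefers, the mere \emph{existence} of a coprime representation is already furnished by Proposition~\ref{inverse}, and the argument above shows that the Euclidean algorithm on $(m,z)$ actually computes one.
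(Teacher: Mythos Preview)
Your argument is correct and in fact more self-contained than the paper's. The paper proceeds in the opposite direction: it first invokes Proposition~\ref{inverse} (which passes to the quadratic extension $\G=\F(\omega)$ with $\omega^2=-1$) to know in advance that $m=(x^2+y^2)u$ for some coprime $x,y$ and unit $u$; it then runs the Euclidean algorithm on $x$ and $y$ to produce $(q_1,\dots,q_s)$, and finally appeals to uniqueness of division to identify the Euclidean output on $(m,z)$ with the zig-zag palindrome built from the $q_i$. Your route avoids the field extension entirely: you run Euclid on $(m,z)$ directly, use P--\ref{PPlewiscarroll} to force $[q_1',\dots,q_{n-1}']=(-1)^n h^2[q_2',\dots,q_n']$, and then compare the reversed expansion with the unit-scaled one to read off the quasi-palindromic symmetry. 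This is closer in spirit to Smith's original integer argument and makes the role of the hypotheses transparent: an odd length $n$ would force the scaling unit $(-1)^n h^2$ to satisfy relations that either make $-1$ a square in $\F$ or kill the middle quotient in characteristic $\neq 2$.

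One small sharpening you might add: your last paragraph gestures at why $n$ must be even, but the precise mechanism is that uniqueness of the Euclidean expansion in $\F[X]$ makes the two descriptions of the quotient sequence for $(M,Y)$ agree \emph{term by term and in the final remainder}; when $n$ is odd the middle quotient must equal its own negative (hence vanish, impossible since all $q_i'$ have positive degree) or the last remainder must be simultaneously $1$ and $-h^2$, forcing $-1=h^2\in\F^{\times 2}$. Making this explicit would turn the sketch into a complete proof.
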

\begin{proof}
Having a divisor $m$ of $z^2+1$, we already know from Proposition~\ref{inverse}
that the degree of $m$ is even. We may assume that
$\mathrm{degree}(z)<\mathrm{degree}(m)$ as we may divide $z$ by $m$.

From Proposition~\ref{inverse} we also know that, for this given $z$,  $m/u=x^2+y^2$ for some coprime $x,y$. Consequently, the Euclidean algorithm on these $x$ and $y$ will give the unit 1 and the sequence $(q_1,\ldots, q_s)$ such that $x=[q_1,\ldots q_s]$, $y=[q_2,\ldots,q_s]$ and $m/u=[q_s,\ldots,q_1,q_1,\ldots,q_s]$. Theorem \ref{theo:SqSumNonComm} tells that, given these $x$ and $y$, the element $z$ has the form $[q_s,\ldots,q_1,q_1,\ldots,q_{s-1}]$, which, by Property P\ref{PPreverse}, is equivalent to $[q_{s-1},\ldots,q_1,q_1,\ldots,q_s]$. Note that the uniqueness of the continuant representation of $(x,y)$ has implicitly been invoked.

We may also assume $\mathrm{degree}(x)>\mathrm{degree}(y)$, otherwise, if $x=\lambda y +t$ with $\lambda$ a unit and  $t$ a polynomial of degree smaller than the degree of $x$ and $y$, then
$m= \left(((1+\lambda ^2)y+\lambda t)^2+t^2\right)\frac u{1+\lambda ^2}$. As a result, we consider
only the case where all $q_i$'s have degree at least 1 in the continuant representation of $(x,y)$.

We then apply the Euclidean algorithm to $m$ and $z$, and obtain,
by virtue of the uniqueness of the division in polynomials, a sequence whose last non-null remainder is $u$. Consequently, $m/u=x^2+y^2$ (see Property P--\ref{PPzigzag} of continuants).
\end{proof}
We illustrate this proposition through some examples. First take $m=2X^4-2X^3+3X^2-2X+1$, then $m$ divides $(2X^3+X)^2+1$. The Euclidean divisions give successively
\begin{align*}
2X^4-2X^3+3X^2-2X+1=&(2X^3+X)(X-1)+2X^2-X+1\\
2X^3+X=&(2X^2-X+1)(X+1/2)+(X/2-1/2)\\
2X^2-X+1=&(X/2-1/2)(4X+2)+2\\
X/2-1/2=&2(X/4-1/4).
\end{align*}
 Here we have $m/2=[2\cdot(X-1)/2,2^{-1}\cdot(2X+1),2\cdot(2X+1),2^{-1}\cdot(X-1)/2]$ with $u=2$, which gives $m/2=(X^2-X/2+1/2)^2+(X/2-1/2)^2=x^2+y^2$. Since $2$ is also a sum  of two squares, we obtain $m=(x+y)^2+(x-y)^2=X^4+(X^2-X+1)^2$.

We find other examples among the cyclotomic polynomials. The cyclotomic polynomial $\Phi_{4n}\in \Q[X]$ divides $X^{2n}+1$.
Thus, $\Phi_{4n}$ is, up to a constant,
a sum  of two squares; see, for instance, \cite{Pou1971}.  Since $\Phi_{4n}(0)=1$, the constant can be chosen equal to 1. For an odd prime $p$, it is  easy to check
\[\Phi_{4p}(X)=\sum_{k=0}^{p-1}(-1)^kX^{2k}=
\left(\sum_{k=0}^{(p-1)/2}(-1)^kX^{2k}\right)^2+\left(X\sum_{k=0}^{(p-3)/2}(-1)^kX^{2k}\right)^2\]

For the small composite odd number 15, the computation gives
\begin{align*}
\Phi_{60}(X)&=X^{16}+X^{14}-X^{10}-X^8-X^6+X^2+1\\
&=[X,X,X^3-X,-X,-X,X, X,-X,-X,X^3-X,X,X]\\
X^{15}&=[X,X^3-X,-X,-X,X,X,-X,-X,X^3-X,X,X]\\
x&=[X,-X,-X,X^3-X,X,X]\\
  &=X^8-X^4+1\\
y&=[-X,-X,X^3-X,X,X]\\
&=X^7 +X^5 -X^3-X\\
\Phi_{60}(X)&=x^2+y^2
\end{align*}

At this stage the following remark is important.

\begin{remark}
If a polynomial with integer coefficients is the sum of squares of two polynomials
with rational coefficients,  it is also the sum of squares of two polynomials with
integer coefficients
\end{remark}
For example, we see that $50X^2+14X+1=(5X+3/5)^2+(5X+4/5)^2$, but it is also $X^2+(7X+1)^2$.

This remark follows from Theorem 2.5 of \cite{CLR1980}. Other proofs can be found in \cite{GZ} and \cite[Sec.~5]{DaLeSc}.

\begin{remark}[Algorithmic considerations]\label{practical} For the cases covered in Proposition \ref{applinv}, given an element $m$ and a solution $z_0$ of $z^2+1\equiv0\pmod{m}$, we can recover a representation $x^2+y^2$ of an associate of $m$ via continuants and Brillhart's \cite{Bri72} optimisation. We divide $m$ by $z_0$ and stop when we first encounter a remainder $r_{s-1}$ with degree at most $\deg(m)/2$. This will be the $(s-1)$-th remainder, and the quotients so far obtained are $(uq_s,u^{-1}q_{s-1},\ldots,u^{{-1}^{(s-2)}}q_2)$.  In this context
\begin{align*}
x=\begin{cases}
r_{s-1}&\textrm{for odd $s$}\\
u^{-1}r_{s-1}&\textrm{for even $s$}\\
\end{cases}
\end{align*}
\begin{align*}
y=\begin{cases}
[uq_s,u^{-1}q_{s-1},\ldots,u^{(-1)^{s-2}}q_2]&\textrm{for odd $s$}\\
u^{-1}[uq_s,u^{-1}q_{s-1},\ldots,u^{(-1)^{s-2}}q_2]&\textrm{for even $s$}\\
\end{cases}
\end{align*}
This observation follows from dividing $m/u=[q_s,\ldots,q_1,q_1,\ldots,q_s]$ by $z_0=[q_{s-1},\ldots,q_1,q_1,\ldots,q_s]$ using continuant properties.
\end{remark}

Algorithm \ref{PolyDivAlg} implements Remark \ref{practical}.

\begin{algorithm}[ht]
\SetKwData{Left}{left}\SetKwData{This}{this}\SetKwData{Up}{up}
\SetKwInOut{Input}{input}\SetKwInOut{Output}{output}
\SetKw{Init}{initialisation:}
\SetKw{Ass}{assumptions:}
\Input{A field $\F$ with characteristic different from 2.\\The ring $R=\F[X]$  of polynomials over $\F$.\\A polynomial  $m$ with $\nm(1)< \nm(m)$.\\A solution $z_0$ of $Q(z,1)\equiv0\pmod{m}$ with $\nm(1)< \nm(z_0)<\nm(m_0)$.}
\Output{A unit $u$ and a proper representation $uQ(x,y)$ of $m$.}
\tcc{Divide $m$ by $z$ using the Euclidean algorithm until we find a remainder $r_{s-1}$ with degree at most $\deg(m)/2$.}
$s\leftarrow1$\;
$m_0\leftarrow m$\;
$r_0\leftarrow z$\;
\Repeat{$\deg(r_{s-1})\le \deg(m)/2$}
{
$s\leftarrow s+1$\;
$m_{s-1}\leftarrow r_{s-2}$\;
find $k_{s-1}, r_{s-1}\in R$ such that $m_{s-2} = k_{s-1} m_{s-1} +r_{s-1}$ with
$\nm(r_{s-1})<\nm(m_{s-1})$\;
}
\tcc{Here we have a sequence $(k_1,\ldots,k_{s-1})$ of quotients.}
$x_{temp}\leftarrow r_{s-1}$\;
$y_{temp}\leftarrow [k_{1},\ldots,k_{s-1}]$\;
\tcc{We obtain a unit $u$.}
\If{$s$ is odd}{Solve $m=u(x_{temp}^2+y_{temp}^2)$ for $u$}\Else{Solve $um=x_{temp}^2+y_{temp}^2$ for $u$}
\tcc{We obtain $(x,y)$ so that  $m=(x^2+y^2)u$.}
\lIf{$s$ is odd}{$x\leftarrow x_{temp}$}\lElse{$x\leftarrow u^{-1}x_{temp}$}\;
\lIf{$s$ is odd}{$y\leftarrow y_{temp}$}\lElse{$y\leftarrow u^{-1}y_{temp}$}\;
\Return $(x,y,u)$
\caption{Deterministic algorithm for constructing a proper representation $uQ(x,y)=u(x^2+y^2)$ of an element $m$}
\label{PolyDivAlg}
\end{algorithm}

\section{Four-square theorem}\label{4sq}

The four-square theorem has been proved in a number of ways.  Hardy and Wright \cite[Sec.~20.5, 20.9, 20.12]{HW08} present three proofs: one based on the ``method of descent'', one based on quaternions, and one based on elliptic functions. We are aware of five other proofs \cite{Hir87, Sma82,AndEkhZei93,SpeWil00,AlaAlaLem07}. The proofs in \cite{Hir87,AndEkhZei93,AlaAlaLem07} are based on the triple-product identity, the paper \cite{SpeWil00} gives an arithmetic proof based on the number of representations of a positive number as the sum of two squares, and the proof in \cite{Sma82} is based on factorisations of $2\times 2$ matrices over the ring $\mathbb{Z}[i]$ of Gaussian integers.  
    
In this section, we provide a new constructive proof of the four-square theorem. Our proof is based on continuants over $\mathbb{Z}[i]$. 

We start by stating the following formula, which was already known to Euler, see \cite[pp.~277]{Dic71}.
\begin{lemma}[Product formula]\label{prodform}
Let $R$ be a commutative ring endowed with an anti-automorphism. Let
$x,y,z,u$ be elements of $R$.
Then
\[
(x\overline x+y\overline y)(z\overline z+u\overline u)=
(xz-y \overline u)(\overline{xz-y \overline u})+(xu+y\overline z)(\overline{xu+y\overline z})
\]
\end{lemma}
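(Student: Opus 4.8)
The plan is to verify the identity by direct expansion of both sides, carefully tracking the order of multiplication since $R$ is noncommutative in the factors but the anti-automorphism reverses products. First I would write out the left-hand side as a sum of four terms,
\[
(x\overline x+y\overline y)(z\overline z+u\overline u)=
x\overline x z\overline z + x\overline x u\overline u
+ y\overline y z\overline z + y\overline y u\overline u,
\]
keeping each factor in the stated left-to-right order.

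Next I would expand the right-hand side using the rule $\overline{\tau\sigma}=\overline\sigma\,\overline\tau$ and the involution property $\overline{\overline\tau}=\tau$. For the first summand, $\overline{xz-y\overline u}=\overline z\,\overline x - u\overline y$, so
\[
(xz-y\overline u)(\overline z\,\overline x - u\overline y)
= xz\overline z\,\overline x - xz\,u\overline y - y\overline u\,\overline z\,\overline x + y\overline u\,u\overline y.
\]
Similarly, since $\overline{xu+y\overline z}=\overline u\,\overline x + z\overline y$,
\[
(xu+y\overline z)(\overline u\,\overline x + z\overline y)
= xu\overline u\,\overline x + xu\,z\overline y + y\overline z\,\overline u\,\overline x + y\overline z\,z\overline y.
\]
The plan is then to add these two expressions and check that the four cross terms cancel while the four ``diagonal'' terms reproduce the left-hand side.

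The main obstacle, and the only place where the hypotheses do real work, is reconciling the diagonal terms with the left-hand side: the expansion naturally produces $xz\overline z\,\overline x$ rather than $x\overline x z\overline z$, and $y\overline u\,u\overline y$ rather than $y\overline y u\overline u$, and so on. Here I would invoke the commutation condition $\tau\overline\tau=\overline\tau\tau$ from~(\ref{special}): since $z\overline z$ is a fixed point of the anti-automorphism, it lies in the centre of $R$, so $xz\overline z\,\overline x = x\overline x z\overline z$; the same argument applied to $u\overline u$, $z\overline z$, and $u\overline u$ in turn converts each diagonal term into the desired form. The cancellation of the cross terms is where I would watch most carefully, checking that $-xz\,u\overline y$ pairs against $+xu\,z\overline y$ and that $-y\overline u\,\overline z\,\overline x$ pairs against $+y\overline z\,\overline u\,\overline x$; this is where commutativity of the base ring $R$ (assumed in this lemma, unlike the earlier noncommutative results) is essential, allowing one to rearrange factors such as $zu$ versus $uz$ so that the mixed terms coincide up to sign and annihilate. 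Once both the diagonal matching and the cross-term cancellation are confirmed, the two sides agree and the identity follows.
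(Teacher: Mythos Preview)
Your direct-expansion argument is correct and the identity does fall out as you describe, but the write-up is muddled about the hypotheses. The lemma assumes $R$ is \emph{commutative}; there is no need to ``carefully track the order of multiplication'' at the outset, nor to invoke condition~(\ref{special}), which is not a hypothesis of this lemma. All the rearrangements you worry about---$xz\overline z\,\overline x = x\overline x\, z\overline z$, and the pairing of $-xz\,u\overline y$ with $+xu\,z\overline y$---are immediate from commutativity alone. Once the spurious appeals to centrality are removed, what remains is a clean brute-force verification.

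The paper takes a different, more structural route: it observes that matrices of the shape $\left(\begin{smallmatrix} a & b\\ -\overline b & \overline a\end{smallmatrix}\right)$ are closed under multiplication, that such a matrix has determinant $a\overline a + b\overline b$, and that determinants are multiplicative over a commutative ring. Multiplying the matrices attached to $(x,y)$ and $(z,u)$ and reading off the determinant of the product gives the identity with no term-by-term bookkeeping. Your approach is entirely elementary and self-contained; the paper's approach explains \emph{why} the specific combinations $xz-y\overline u$ and $xu+y\overline z$ appear (they are the entries of the product matrix) and makes the connection to the classical quaternion-norm proof of the four-squares identity transparent.
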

\begin{proof}
This can be seen by looking at the determinants in the equality
\[
\begin{barray}{2} x & y\\-\overline y & \overline x\end{barray}
\begin{barray}{2} z & u\\-\overline u & \overline z\end{barray}
=
\begin{barray}{2} xz-y\overline u  & xu+y\overline z\\
-\overline{ xu+y\overline z}  & \overline  {xz-y\overline u}\end{barray}.
\]\end{proof}

We use Lemma~\ref{prodform}
for the case of $R$ being
the ring of Gaussian integers, with its conjugation. This product formula allows to reduce the proof of the four-square theorem
to the case of primes.

We recall that each prime $p$ is either of the form $z\overline z$ or divides $z\overline z +1$, for some $z\in \mathbb{Z}[i]$ \cite[Prop.~4.18]{Cox89}. If $p=z\overline z$ then $p$ is trivially a sum of four squares. Assume the equation $z\overline z +1 \equiv 0\pmod p$ admits a solution $z_0$ over $\mathbb{Z}[i]$. Given this solution $z_0$, we prove the four-square theorem by constructing a representation of $p$ as $x\overline x+y\overline y$, with $x,y\in \mathbb{Z}[i]$. 

By reducing $z_0$ modulo $p$, we may assume $|z_0|\le p/\sqrt2$, and thus
$z_0\overline z_0+1< p^2$ (if $p=2$, a
parity argument shows the inequality remains valid). Here $|z_0|$ denotes the {\it complex norm} of $z_0$.

Let $p_0:=p$. Then, we produce a succession of $s$ equalities
$p_{i}p_{i+1}=z_i\overline{z_i}+1$ and $z_i=q_{i+1}p_{i+1}+z_{i+1}$,
where the sequence of positive integers $p=p_0,p_1,\ldots, p_s=1$ is decreasing.
At the end, we have $p_{s-1}p_s=z_{s-1}\overline{z_{s-1}}+1$ and $q_s=z_{s-1}$.

We now build a continuant representation of $q$. The equation $p_{s-1}p_s=z_{s-1}\overline{z_{s-1}}+1$ can be written as $p_{s-1}=[q_{s},\overline{q_{s}}]=[\overline{q_{s}},q_{s}]$, since $p_s=1$,  $z_{s-1}=q_s$, and $q_{s}$ and $\overline{q_{s}}$ commute; see Lemma \ref{lemm:LC_IdentNonComm}.  From the equation $z_{s-2}=q_{s-1}p_{s-1}+z_{s-1}$ and Remark \ref{revanti}, it follows that $z_{s-2}=[q_{s-1},\overline{q_{s}},q_{s}]$.  The equation $p_{s-2}p_{s-1}=z_{s-2}\overline{z_{s-2}}+1$ can therefore be written as 
\begin{align*}
p_{s-2}[\overline{q_{s}},q_{s}]=&[q_{s-1},\overline{q_{s}},q_{s}]\overline{[q_{s-1},\overline{q_{s}},q_{s}]}+1&\\
p_{s-2}[\overline{q_{s}},q_{s}]=&[q_{s-1},\overline{q_{s}},q_{s}][\overline{q_{s}},q_{s},\overline{q_{s-1}}]+1& \text{(by Property P--\ref{PPantiauto})}
\end{align*} 
Hence, $p_{s-2}=[q_{s-1},\overline{q_{s}},q_{s},\overline{q_{s-1}}]$ (by Lemma \ref{lemm:LC_IdentNonComm}). Continuing this process, we obtain continuant representations for $p_{s-3}$,$\ldots$,$p_0$. The representation for $p_0=p$ is the quasi-palindromic continuant $[q_1,\overline{q_2},\ldots, q_2,\overline{q_1}]$,
where the central pair
is $q_s,\overline{q_s}$ if $s$ is odd and $\overline{q_s},q_s$ if $s$ is even. Thus, we have a representation of $p=x\overline x+y\overline y$, with $x$ and $y$ being as follows:\begin{align*}
x&=\begin{cases}
[q_1,\overline{q_2},\ldots,\overline{q_{s-1}}, q_s]& \textrm{if $s$ is odd}\\
[q_1,\overline{q_2},\ldots,q_{s-1},\overline{q_s}]& \textrm{if $s$ is even}.
\end{cases}\\
y&=\begin{cases}[q_1,\overline{q_2},\ldots,\overline{q_{s-1}}]& \text{if $s$ is odd}\\
[q_1,\overline{q_2},\ldots,q_{s-1}]& \text{if $s$ is even}.
\end{cases}
\end{align*}
This completes the proof of the four-square theorem.

Consider the following example, where $p_0=431$ and $z_0=54+10i$.
\begin{align*}
431\cdot 7&=(54+10i)(54-10i)+1 &\rightarrow& &54+10i&=(8+i)7+(-2+3i)\\
7\cdot 2&=(-2+3i)(-2-3i)+1 &\rightarrow& &-2+3i&=(-1+i)2+i\\
2\cdot 1&=(i)(-i)+1 &\rightarrow& &i&=i\cdot1
\end{align*}

Hence $(q_1,q_2,q_3)=(8+i,-1+i,i)$, $x=[8+i,-1-i,i]$ and $y=[8+i,-1-i]$. Thus,\begin{align*}
431&=[8+i,-1-i,i,-i,-1+i,8-i]\\
&=[8+i,-1-i,i]\overline{[8+i,-1-i,i]}+[8+i,-1-i]\overline{[8+i,-1-i]}\\
&=(17-5i)(17-5\overline i)+(-6-9i)(-6-9\overline i)\\
&=17^2+5^2+6^2+9^2
\end{align*}

\begin{remark}[Number of representations by the form $x^2+y^2+z^2+u^2$]
The number of representations of positive numbers by this form is given by Jacobi's theorem \cite[Thm.~9.5]{KW}. See also \cite[Sec.~20.12]{HW08} and \cite{Hir87,AlaAlaLem07,AndEkhZei93,SpeWil00}.
\end{remark}

\section{Some quadratic forms representing  integers}\label{otherform}
Using the techniques of Section \ref{4sq}
we may build other forms representing either all positive integers or all integers. Examples of forms representing all positive integers are $x^2-xy+y^2+z^2-zu+u ^2$ and $x^2+3y^2+z^2+3u^2$, while the form $x^2-3y^2+z^2-3u^2$ is an example of a form representing all integers. Some of these results have already appeared in the literature; see, for instance, \cite[Thm.~1.9]{AlaAlaLem07}, \cite[Thm.~12]{AlaAlaWil06}, \cite{Cha08} and \cite[Thm.~13]{HuaOuSpe02}.

\begin{proposition}\label{prop4}
Each positive integer has the form $x^2-xy+y^2+z^2-zu+u ^2$ with $x,y,z,u$ integers.
\end{proposition}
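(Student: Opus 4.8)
The plan is to mirror the four-squares argument of Section~\ref{4sq}, replacing the Gaussian integers by the Eisenstein integers $R=\Z[\omega]$, where $\omega$ is a primitive cube root of unity ($\omega^2+\omega+1=0$), equipped with complex conjugation $\overline{a+b\omega}=a+b\overline{\omega}$. Since $\omega+\overline{\omega}=-1$ and $\omega\overline{\omega}=1$, the norm of an Eisenstein integer is $(a+b\omega)\overline{(a+b\omega)}=a^2-ab+b^2$. Hence, writing $x=a+b\omega$ and $y=c+d\omega$, the quantity $x\overline{x}+y\overline{y}$ equals $a^2-ab+b^2+c^2-cd+d^2$, so the proposition is precisely the assertion that every positive integer is a sum of two Eisenstein norms. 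The ring $R$ is commutative, so conjugation trivially satisfies conditions~(\ref{special}) and Lemma~\ref{prodform} applies; this product formula shows that the set of integers of the form $x\overline{x}+y\overline{y}$ is closed under multiplication. Since $1=1\cdot\overline{1}$, it therefore suffices to represent each prime.

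Next I would supply the arithmetic input analogous to the Gaussian dichotomy quoted in Section~\ref{4sq}: for every prime $p\neq 3$ the binary form $a^2-ab+b^2$, of discriminant $-3$, represents $-1$ over $\F_p$, so that $p\mid z\overline{z}+1$ for some Eisenstein integer $z$. Indeed, when $-3$ is a square in $\F_p$ the form is isotropic, hence hyperbolic and universal; when $-3$ is a non-square the form is anisotropic and equals the norm form of $\F_{p^2}/\F_p$, whose norm map is surjective onto $\F_p^\ast$; either way $-1$ lies in the image (this includes $p=2$, where $-3\equiv 1$ is a square). The prime $p=3$ is exceptional, since modulo $3$ the form collapses to $(a+b)^2$ and omits the value $-1$; I would dispose of it directly via $3=(1-\omega)\overline{(1-\omega)}=1^2-1\cdot(-1)+(-1)^2$.

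With a starting $z$ in hand I would run the descent exactly as in Section~\ref{4sq}. Reducing $z$ modulo $m$ amounts to rounding $z/m$ to the nearest point of the hexagonal lattice $\Z[\omega]$, whose covering radius is $1/\sqrt3$; thus one finds $z'$ with $z'\equiv z\pmod m$ and $|z'|\le m/\sqrt3$, whence $z'\overline{z'}+1\le m^2/3+1<m^2$ for $m\ge 2$. Because $z'\overline{z'}\equiv z\overline{z}\pmod m$ (the correction term $z\overline{q}+\overline{z}q$ is an integer), the reduction preserves divisibility, so $z'\overline{z'}+1=m\,m_1$ with $1\le m_1<m$. Iterating yields a strictly decreasing chain of positive integers $m=m_0>m_1>\cdots>m_s=1$ together with quotients $q_i$ satisfying $m_{i-1}m_i=z_{i-1}\overline{z_{i-1}}+1$ and $z_{i-1}=q_im_i+z_i$. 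The quasi-palindromic sequence built from the $q_i$ and their conjugates (with the central pair chosen according to the parity of $s$, the alternation being justified by Remark~\ref{revanti}) has continuant $m$, and the continuants of its first $s$ and first $s-1$ terms furnish $x,y\in\Z[\omega]$ with $m=x\overline{x}+y\overline{y}$.

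I expect the main obstacle to be the arithmetic step: verifying that $-1$ is an Eisenstein norm modulo every prime $p\neq 3$, and correctly isolating $p=3$ as the ramified prime that must be handled outside the descent machinery. A secondary point requiring care is the covering-radius estimate for the hexagonal lattice, which replaces the factor $m/\sqrt2$ of the Gaussian case by $m/\sqrt3$ and is what guarantees the descent inequality $z\overline{z}+1<m^2$.
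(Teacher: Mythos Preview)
Your proposal is correct and follows essentially the same route as the paper: pass to the Eisenstein integers, use the product formula to reduce to primes, note that each prime is either an Eisenstein norm (this is how $p=3$ is handled) or divides some $z\overline z+1$, and then run the quasi-palindromic descent of Section~\ref{4sq}. The one substantive difference is in the reduction step: you round $z/m$ to the nearest point of the hexagonal lattice and invoke its covering radius $1/\sqrt3$, obtaining $z\overline z\le m^2/3$ and hence $z\overline z+1<m^2$ for every $m\ge2$, so the sequence $m_0>m_1>\cdots$ is strictly decreasing with no exceptions; the paper instead reduces the two coordinates of $z$ separately, which yields only $z\overline z\le 3m^2/4$ and forces it to single out the ``trap'' $2\cdot2=(1-j)(1-\overline j)+1$ when $m_{s-1}=2$ and choose the quotient there with care. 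Your sharper estimate removes that special case entirely, at the cost of a slightly less elementary rounding procedure.
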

\begin{proof}
Consider the ring $\mathbb{Z}[j]$ of Eisenstein integers, with $j=\exp(2i\pi/3)$, endowed with its natural anti-automorphism. We note that $v^2-vw+w^2$ is the norm of $v+wj$. As in Section~\ref{4sq}, Lemma \ref{prodform} for the case $R=\mathbb Z[j]$ reduces the task to primes. Again, as in Section~\ref{4sq}, every prime $p$ is either of the form $z\overline z$ or divides some $z\overline z+1$, with $z\in \mathbb Z[j]$. See \cite[Prop.~4.7]{Cox89}.

Assume the equation $z\overline z +1 \equiv 0\pmod p$ admits a solution $z_0$ over $\mathbb{Z}[i]$. Then, reasoning as in Section~\ref{4sq}, the division process provides a deterministic algorithm to find  a representation $p=x\overline x+y\overline y$. Here again we reduce $z_0$ modulo $p$ and assume $z_0\overline z_0\le 3 p^2/4$. Thus, we only have to be careful if $p_{s-1}=2$ to avoid the trap $2\cdot 2= (1-j)(1-\overline{j})+1$, where $p_{s-1}=p_s=2$. This problem is avoided by choosing a convenient quotient $q_{s-1}$.
\end{proof}

Next we show an example with the aforementioned trap, that is, where the sequence $p_0,\ldots,p_s$ is not decreasing. Take $p_0=47$ and $z_0=11+7j$, then $94=47\cdot 2=(11+7j)(11+7 \overline j)+1$. Here we have $p_1=2$. The equation $11+7j=q_1p_1+z_1$ with the quotient $q_1=5+4j$ would produce $z_1=1-j$ and $p_2=2$, that is, $2\cdot 2=(1-j)(1-\overline j)+1$. However, with the quotient $q_1=5+3j$, we get $z_1=1+j$ and $p_2=1$, that is, $2\cdot 1=(1+j)(1+\overline j)+1$ and $q_2=1+j$. Hence, $(q_1,q_2)=(5+3j, 1+j)$ and
\begin{align*}
47&=[5+3j,1+\overline j,1+j,5+3\overline j]\\
&=[5+3j, 1+\overline j]\overline{[5+3j, 1+\overline j]}+[5+3j]\overline{[5+3j]}\\
&=(4-2j)(4-2\overline j)+(5+3j)(5+3\overline j)\\
&=4^2-(4)(-2)+(-2)^2+5^2-5\cdot 3+3^2.
\end{align*}

Previous proofs of Proposition \ref{prop4} appeared in \cite[Thm.~12]{AlaAlaWil06}, \cite{Cha08}, and \cite[Thm.~13]{HuaOuSpe02}. The proof in \cite[Thm.~13]{HuaOuSpe02} is perhaps the first elementary proof.

\begin{remark}[Number of representations by the form $x^2-xy+y^2+z^2-zu+u ^2$]
The number of representations is given by Liouville's theorem \cite[Thm.~ 17.3]{KW}. See also \cite[Thm.~12]{AlaAlaWil06}, \cite{Cha08}, and \cite[Thm.~13]{HuaOuSpe02}.
\end{remark}

\begin{corollary}\label{cor:2form}
Every positive integer has the form $x^2+3y^2+z^2+3u ^2$.
\end{corollary}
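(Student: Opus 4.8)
The plan is to deduce this straight from Proposition~\ref{prop4}. That proposition already writes every positive integer $m$ as $x^2-xy+y^2+z^2-zu+u^2$, that is, as a sum of two values of the Eisenstein norm $v^2-vw+w^2=N(v+wj)$. So it suffices to prove the purely elementary fact that \emph{each individual value of the form $v^2-vw+w^2$ is also of the form $a^2+3b^2$}. Granting this, if $x^2-xy+y^2=a^2+3b^2$ and $z^2-zu+u^2=c^2+3d^2$, then $m=a^2+c^2+3(b^2+d^2)=a^2+3b^2+c^2+3d^2$, which is exactly the required shape after relabelling the variables. In other words, the two binary forms $v^2-vw+w^2$ and $a^2+3b^2$ represent the same set of integers, and the corollary is just the additive consequence of this equivalence together with Proposition~\ref{prop4}.

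The heart of the matter is therefore the identity between these two binary forms. Completing the square gives
\[
4(v^2-vw+w^2)=(2v-w)^2+3w^2,
\]
so the desired representation is clear up to a factor $4$; the only real task is to remove that factor. I would do this by a short parity case analysis. If $w$ is even, then $v^2-vw+w^2=(v-\tfrac{w}{2})^2+3(\tfrac{w}{2})^2$; by the symmetry of the form in $v$ and $w$ the same works whenever $v$ is even; and when $v$ and $w$ are both odd the quantities $\tfrac{v+w}{2}$ and $\tfrac{v-w}{2}$ are integers with $v^2-vw+w^2=(\tfrac{v+w}{2})^2+3(\tfrac{v-w}{2})^2$. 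These three cases exhaust all parities, so every Eisenstein norm is of the form $a^2+3b^2$.

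Conceptually, this parity trick is simply the statement that among the associates $v+wj$, $(v+wj)j$, $(v+wj)j^2$ the coefficient of $j$ runs, up to sign, through $w$, $v-w$, $v$, and these three numbers cannot all be odd; choosing an associate whose $j$-coefficient is even and then completing the square produces $a^2+3b^2$ with no denominators. I would state this unit-based explanation as the reason the equivalence holds, but carry out the explicit parity computation above in the actual proof, since it is self-contained and needs no properties of the ring.

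I do not expect a serious obstacle. All the genuine work, namely the reduction to primes and the continuant descent giving $m=x\overline x+y\overline y$ in the Eisenstein integers, has already been done in Proposition~\ref{prop4}; what remains is only the form equivalence on the level of represented integers. The single point requiring a little care is the both-odd case, where naively completing the square on $v^2-vw+w^2$ leaves halves: this is precisely where the substitution $a=\tfrac{v+w}{2}$, $b=\tfrac{v-w}{2}$ is needed, and it is exactly what cancels the stray factor of $4$ coming from the displayed identity.
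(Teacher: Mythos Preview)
Your proposal is correct and follows essentially the same route as the paper: reduce via Proposition~\ref{prop4} to showing that every value of $v^2-vw+w^2$ is of the form $a^2+3b^2$, and then verify this by the same three-case parity analysis (one variable even, the other even, both odd) with the identical substitutions. Your additional remark interpreting the parity trick via the Eisenstein units is a nice conceptual gloss not present in the paper, but the argument itself is the same.
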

\begin{proof} By Proposition \ref{prop4} we only need to prove that $x^2-xy+y^2$ has the form $p^2+3q^2$. Indeed,
\begin{enumerate}
\item
If $x$ is even, say $x=2t$, then $x^2-xy+y^2=4t^2-2ty+y^2=(y-t)^2+3t^2$
\item
If $y$ is even, say $y=2t$, then $x^2-xy+y^2=(x-t)^2+3t^2$
\item
If $x$ and $y$ are both odd, then $x^2-xy+y^2=((x+y)/2)^2+3((y-x)/2)^2$
\end{enumerate}
\end{proof}

A proof of Corollary \ref{cor:2form} appeared in \cite[Thm.~1.9]{AlaAlaLem07}.
 
\begin{remark}[Number of representations by the form $x^2+3y^2+z^2+3u ^2$]
The number of representations was stated without proof by Liouville \cite{Lio60,Lio63} and it is proved in \cite[Thm.~1.9]{AlaAlaLem07}.
\end{remark}

\begin{proposition}
Each integer  has the form $x^2-3y^2+z^2-3u ^2$.
\end{proposition}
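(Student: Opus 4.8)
The plan is to read the form $x^2-3y^2+z^2-3u^2$ as a sum of two norms in the real quadratic ring $R=\Z[\sqrt3]$. Setting $\alpha=x+y\sqrt3$ and $\beta=z+u\sqrt3$ and using the conjugation $\overline{v+w\sqrt3}=v-w\sqrt3$, one has $x^2-3y^2=\alpha\overline\alpha$ and $z^2-3u^2=\beta\overline\beta$, so the statement is exactly that every integer is of the form $\alpha\overline\alpha+\beta\overline\beta$ with $\alpha,\beta\in R$. Since $R$ is commutative, conjugation is an anti-automorphism that trivially satisfies conditions~(\ref{special}), and any element of $R$ can be reduced coordinatewise modulo a nonzero integer. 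This places us in the exact framework of Section~\ref{4sq}, with the essential new feature that the norm $\zeta\overline\zeta=a^2-3b^2$ is now \emph{indefinite}.

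First I would reduce to primes and signs. By the Product formula (Lemma~\ref{prodform}) applied to $R$ with its conjugation, the set $S$ of integers of the form $\alpha\overline\alpha+\beta\overline\beta$ is closed under multiplication. It contains $0=0+0$, $1=1\cdot\overline1+0$, and $-1=1\cdot\overline1+(1+\sqrt3)\overline{(1+\sqrt3)}$ (since $(1+\sqrt3)\overline{(1+\sqrt3)}=-2$); hence every integer will lie in $S$ once every positive prime does. The small primes are handled by inspection: $2=1\cdot\overline1+1\cdot\overline1$ and $3=(3+\sqrt3)\overline{(3+\sqrt3)}+\sqrt3\,\overline{\sqrt3}$, i.e.\ $3=6-3$.

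Next, for a prime $p\ge5$ I would show that $-1$ is a norm modulo $p$, so that $p\mid z\overline z+1$ for some $z\in R$. Indeed $-1\equiv a^2-3b^2\pmod p$ is solvable because the residue sets $\{a^2\bmod p\}$ and $\{3b^2-1\bmod p\}$ each have $(p+1)/2$ elements and so must meet. (The prime $3$ genuinely escapes this argument, as $a^2-3b^2\equiv a^2\not\equiv-1\pmod3$, which is why it was treated directly above.) With such a $z$ I would run the descent of Section~\ref{4sq}: reduce $z$ modulo $m_0=p$ so both coordinates lie in $(-p/2,p/2]$, put $m_0m_1=z\overline z+1$, and iterate $z_i=q_{i+1}m_{i+1}+z_{i+1}$ with $m_{i+1}m_{i+2}=z_{i+1}\overline{z_{i+1}}+1$. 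The resulting quasi-palindromic sequence of quotients then yields, through Remark~\ref{revanti} and the continuant identities, a representation $p=\alpha\overline\alpha+\beta\overline\beta$ up to a unit, which is harmless since $S$ is closed under multiplication by units and by $-1$.

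The main obstacle is the descent itself, precisely because the norm is indefinite: $N(z_i)=z_i\overline{z_i}$ may be negative, so the integers $m_i$ need no longer be positive and one cannot simply invoke a decreasing sequence of positive integers as in the Gaussian case of Section~\ref{4sq}. What rescues the argument is that $N$ never takes the values $0$ or $-1$ on $R\setminus\{0\}$ (there is no element of norm $-1$, since $a^2\equiv2\pmod3$ is impossible), combined with the bound $|N(z_i)|\le 3m_{i+1}^2/4$ from the coordinatewise reduction; together these give $0<|m_{i+2}|<|m_{i+1}|$ whenever $|m_{i+1}|\ge3$, so that $|m_i|$ strictly decreases down to a unit. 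The delicate points, exactly as in Proposition~\ref{prop4}, occur at the small residue $|m_i|=2$, where the quotient must be chosen so as not to stall but to land on a unit $m_{i+1}=\pm1$; checking that this is always possible, and bookkeeping the units and signs produced along the way so that one recovers $p$ (rather than an associate) in $S$, is where the real care is needed.
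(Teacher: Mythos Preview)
Your proposal is correct and follows exactly the route the paper intends: its one-line proof simply says to transplant the argument of Proposition~\ref{prop4} to $\Z[\sqrt3]$ with its conjugation, which is precisely what you do. Your extra care about the indefiniteness of the norm---using that $a^2-3b^2$ never equals $-1$ to keep $m_{i+2}\neq0$, and the bound $|N(z_{i+1})|\le 3m_{i+1}^2/4$ to force $|m_{i+2}|<|m_{i+1}|$ for $|m_{i+1}|\ge3$---fills in details the paper leaves implicit, and your separate treatment of $p=3$ and the use of $-1=1+(1+\sqrt3)(1-\sqrt3)\in S$ match the worked example following the proposition.
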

\begin{proof} This can be proved by reasoning as in Proposition \ref{prop4}. The necessary ring is $\Z[\sqrt3]$ endowed with its natural anti-automorphism.\end{proof}

In the following example we try to represent 19 and $-19$, noticing that $19\cdot 2= 7^2-3\cdot 2^2+1$.
\[
\begin{array}{l r}
19\cdot 2 = (7+\sqrt3)(7-\sqrt 3) +1\\
& q_1=3+\sqrt 3\\
2\cdot 1 =(1+0\sqrt 3)(1-0\sqrt 3)+1\\
&q_2=1+0\sqrt 3.
\end{array}
\]
Hence
\begin{align*}
19&=[3+\sqrt 3,1-0\sqrt3][3-\sqrt 3,1+0\sqrt3]+[3+\sqrt 3][3-\sqrt 3]\\
&=(4+\sqrt3)(4-\sqrt3)+(3+\sqrt3)(3-\sqrt3)\\
&=4^2-3\cdot1^2+3^2-3\cdot1^2.
\end{align*}

Then,  to represent $-19$, we use $-1=1\cdot 1+(1+\sqrt3)(1-\sqrt3)$ and the product formula (Lemma \ref{prodform}) to get
\begin{align*}
-19&=((4+\sqrt3)(1+\sqrt3)+(3+\sqrt3))\overline{(4+\sqrt3)(1+\sqrt3)+(3+\sqrt3)}\\
&+((4+\sqrt3)-(3+\sqrt3)(1-\sqrt3))\overline{(4+\sqrt3)-(3+\sqrt3)(1-\sqrt3)}\\
&=(10+6\sqrt3)(10-6\sqrt3)+(4+3\sqrt3)(4-3\sqrt3)\\
&=10^2-3\cdot6^2+4^2-3\cdot3^2.
\end{align*}

\begin{remark}
If continuants over new rings are considered, the approach presented in Sections \ref{4sq} and \ref{otherform} is likely to provide more quaternary quadratic forms representing either all positive integers or all integers.   \end{remark}

\end{document}